\numberwithin{equation}{section}
\newcommand{\eps}{\varepsilon}
\newcommand{\Or}{\mathcal{O}}
\newcommand{\Pb}{\mathbb{P}}
\newcommand{\E}{\mathbbm{E}}
\newcommand{\Id}{\mathbbm{1}}
\newcommand{\e}{\varepsilon}
\newcommand{\I}{{\rm i}}
\newcommand{\R}{\mathbb{R}}
\newcommand{\Z}{\mathbb{Z}}
\renewcommand{\Re}{\mathrm{Re}}
\newtheorem{prop}{Proposition}[section]
\newtheorem{thm}[prop]{Theorem}
\newtheorem{lem}[prop]{Lemma}
\newtheorem{cor}[prop]{Corollary}
\newtheorem{cla}[prop]{Claim}
\newtheorem{assumpt}[prop]{Assumption}
\newtheorem{rem}[prop]{Remark}
\newenvironment{remark}{\begin{rem}\normalfont}{\end{rem}}
\title{Limit distributions for KPZ growth models with spatially homogeneous random initial conditions}
\author{S. Chhita\thanks{Department of Mathematical Sciences, Durham University, Stockton Road, Durham, DH1 3LE, UK. E-mail: {\tt sunil.chhita@durham.ac.uk}} \and
P.L. Ferrari\thanks{Institute for Applied Mathematics, Bonn University, Endenicher Allee 60, 53115 Bonn, Germany. E-mail: {\tt ferrari@uni-bonn.de}} \and H. Spohn\thanks{Zentrum Mathematik, TU M\"unchen, Boltzmannstrasse 3, D-85747 Garching, Germany. E-mail: {\tt spohn@ma.tum.de}}}
\date{April 7, 2017}
\begin{document}
\sloppy
\maketitle

\begin{abstract}
For stationary KPZ growth in $1+1$ dimensions the height fluctuations are governed
by the Baik-Rains distribution. Using the totally asymmetric single step growth model, alias TASEP,
we investigate height fluctuations for a general class of spatially homogeneous random initial conditions.
We prove that for TASEP there is a one-parameter family of limit distributions, labeled by the diffusion coefficient of the initial conditions. The
distributions are defined through a variational formula. We use Monte Carlo simulations to obtain their numerical plots.
Also discussed is the connection to the six-vertex model at its conical point.
\end{abstract}

\section{Introduction}
For stochastic growth models in the Kardar-Parisi-Zhang (KPZ) universality class over a one-dimensional substrate the height fluctuations ``always'' broaden  as $t^{1/3}$. On the other hand the full probability density function depends on the choice of the initial data. As well known, for a flat initial surface, $h(x,t=0) = 0$,  the large $t$ fluctuations of $h(0,t)$ are distributed according to the GOE Tracy-Widom distribution~\cite{TW96,PS00,BR99}. In contrast, if the height profile is macroscopically curved, then GOE has to be replaced by GUE~\cite{TW94,Jo00b,BDJ99,ACQ10,SS10a,FV13,BCG14}.
Such dependence is not so easily inferred directly from the growth dynamics.  But if one switches to the equivalent model of a directed polymer in a space-time random potential, then one end point of the polymer is fixed, while the constraints for the other end point depend on the initial conditions. For example, in the curved case the other end point is fixed and in the flat case the other end point is sampled along a straight line. The size of the polymer free energy fluctuations is robust, but the probability density is sensitive to the particular end point sampling.

In our contribution we study such dependence in case the initial slopes are statistically homogeneous.
Besides flat, the only other  known explicit example refers to initial heights which fluctuate as two-sided Brownian motion. If
as surface growth model we consider the one-dimensional KPZ equation,
\begin{equation}
\partial_t h = \tfrac{1}{2}(\partial_x h)^2 + \tfrac{1}{2}\partial_x^2 h + \xi
\end{equation}
with $\xi(x,t)$ normalized space-time white noise, then the time-stationary initial data are
\begin{equation}
h(x,0) = B(x)
\end{equation}
with $B(x)$ a two-sided Brownian motion. As proved  in~\cite{BCFV14},
\begin{equation}
h(0,t) \simeq -\tfrac1{24}t + (t/2)^{1/3}\xi_\mathrm{BR}
\end{equation}
for large $t$ and the random amplitude $\xi_\mathrm{BR}$ is Baik-Rains distributed~\cite{BR00}. The same limit distribution has been established for other stochastic growth model with stationary initial conditions, see~\cite{FS05a,BR00,Agg16}.

Our interest are spatially homogeneous random initial data, however dropping the assumption of being time-stationary under the growth dynamics.
This is physically a rather natural assumption, since it is realized
by initial slopes being approximately independent. In fact, we will exclusively focus on the one-sided growing single-step model,
\textit{alias} TASEP, with initial  slope (= particle) configuration
$\eta_j(t=0) = \eta_j$ taking values $0,1$. The restriction to spatially homogeneous initial slopes means that $\{\eta_j|j \in \Z\}$ is a stationary stochastic process. We assume the validity of a functional central limit theorem
\begin{equation}\label{eq2a}
\lim_{\ell\to\infty}  \frac{2}{\sqrt{\ell}} \sum_{j = 0}^{[\gamma x \ell]}  \big(\eta_j - \langle\eta_0\rangle\big) = \sigma B(x)
\end{equation}
for some $\sigma \geq 0$. Time-stationary corresponds to $\{\eta_j|j \in \Z\}$ being Bernoulli. In \eqref{eq2a},  $\gamma$ is a scaling constant set such that $\sigma=1$ for Bernoulli. As to be shown, if $\sigma = 0$, then the height fluctuations are GOE Tracy-Widom distributed, thereby confirming a conjecture of~\cite{QR16} in the context of the KPZ equation. In our contribution we will argue, and prove under additional assumptions, that for translation invariant random initial data the universality classes are labeled by $\sigma$. For each $\sigma$ there is a distinct distribution function $F^{(\sigma)}(s)$, as defined through the variational formula
\begin{equation}\label{eq5}
F^{(\sigma)}(s) = \mathbb{P}\Big( \sup_{x\in\R}\{\sqrt{2} \sigma B(x) +\mathcal{A}_2(x) - x^2\} \leq s \Big).
\end{equation}
Here $\mathcal{A}_2(x)$ is the Airy process, independent of the two-sided Brownian motion $B(x)$.
To indicate our main result, let us denote by $\rho$ the expected density of particles and by $\mathsf{j}$ the expected (infinitesimal) current of particles. Time correlations are significant only close to the ray $\{j = \mathsf{j}'(\rho)t\}$. Thus if $\mathsf{j}'(\rho)=0$, the height fluctuations close to the origin, as obtained from $\eta_j(t)$, are governed by $F^{(\sigma)}(s)$ in the large $t$ limit. In case $\mathsf{j}'(\rho)\neq 0$,  $F^{(\sigma)}(s)$ will be observed only when properly centered (see e.g.~\cite{FS05a} in the case $\sigma=1$).

Our proof, to be detailed in Section~\ref{sectVarFormula}, uses the last passage percolation picture. We rely on several non-trivial results obtained only recently, the most important ones being tightness~\cite{CP15b} for the point-to-point process with ending points on horizontal lines, and the one-point slow-decorrelation~\cite{CFP10b}. Finally one also needs to know the convergence of the finite-dimensional distributions~\cite{BP07}. These ingredients can be used to obtain a functional slow-decorrelation result (Theorem~\ref{thmSlowDecorrelation}), see~\cite{CLW16} for the discrete time counterpart. Interestingly, this latter result then implies tightness of the point-to-point process along generic lines, see e.g.\ Corollary~\ref{corTightness}, which is a result not covered by the elegant and soft arguments of~\cite{CP15b}.

Two recent contributions are close to our work. In the context of the KPZ equation, Quastel and Remenik~\cite{QR16} investigate the domains of attraction for general initial conditions. In this model the convergence to the Airy process is not available. Instead the authors have to rely on a tightness argument, which yields weaker results. Corwin, Liu, and Wang ~\cite{CLW16} study the discrete time TASEP, which maps to last passage percolation with geometric random variables. In their main result, Theorem~2.7 of \cite{CLW16}, they consider slightly more general initial conditions and derive a variational formula of the form \eqref{eq5} with $\sqrt{2} \sigma B(x)$ replaced by the limiting initial condition, see also Remark~\ref{rem2.7} below. In our contribution we use the same strategy to control the convergence of the variational process for $x$ only restricted to a finite region, say $[-M,M]$. However, the control on the variational process beyond $[-M,M]$ is simplified and we do not need to appeal to the Brownian Gibbs property of an underlying non-intersecting line ensemble.

As already proved in~\cite{Jo03b}, $F^{(0)}(s)=F_{\rm GOE}(2^{2/3} s)$, with $F_{\rm GOE}$ the GOE Tracy-Widom distribution. As follows from our results,
for $\sigma=1$  we conclude that
\begin{equation}
F^{(1)}(s)=  \mathbb{P}\Big( \sup_{x\in\R}\{\sqrt{2} B(x) +\mathcal{A}_2(x) - x^2\} \leq s \Big)= F_{\rm BR}(s),
\end{equation}
where $F_{\rm BR}$ is the Baik-Rains distribution function~\cite{BR00}, see Corollary~\ref{CorFBR}.
For all other values of $\sigma$ we have to rely on Monte Carlo simulations, which will be  reported in  Section~\ref{sectSimulation}. A convenient choice is to simulate the TASEP dynamics and to independently sample $\eta_j$ as a reversible Markov chain. The average density is taken as $\tfrac{1}{2}$ and varying the only remaining parameter one can achieve \mbox{$0 \leq \sigma < \infty$}. To check the claimed universality we also simulate the diagonal transfer matrix  of the six-vertex model at its stochastic point, see Section~\ref{sectSixVertex}. For the equilibrium Gibbs measure of this model, the Baik-Rains distribution has been proved recently~\cite{Agg16}.

A further explicit  solution of the variational problem \eqref{eq5} results by dropping the Airy term,
\begin{equation}\label{eq5a}
\mathbb{P}\Big( \sup_{x\in\R}\{B(x)  - x^2\} \leq s \Big).
\end{equation}
Formally, one scales $s$ as $\sigma^{4/3}$ and takes  the limit $\sigma \to \infty$, see Appendix~\ref{SectLargeSigma}.
The solution  to \eqref{eq5a} is studied in~\cite{Gro89}. Its probability density vanishes for $s <0$ and decays as a stretched exponential with power $\tfrac{3}{2}$ for $s \to \infty$. This information can be used to obtain  for large $\sigma$ an explicit approximation to $F^{(\sigma)}$.

\bigskip\noindent
{\bf Acknowledgments.} The authors are grateful for discussions with Ivan Corwin and Kazumasa Takeuchi. This work started when two of us visited in early 2016 the Kavli Institute of Theoretical Physics at Santa Barbara. The work of P.L. Ferrari and S. Chhita is supported by the German Research Foundation  as part of the SFB 1060--B04 project.

\section{Convergence to the variational formula}\label{sectVarFormula}
In this section we prove the variational formula (\ref{eq5}) for the totally asymmetric simple exclusion process (TASEP) in continuous time, which is directly related to the last passage percolation (LPP) model with exponential waiting times. Thus we first recall the models and their well known relation.
\subsection{TASEP and LPP} \label{sec:TASEPandLPP}
Let us first recall the relation between TASEP and LPP. A last passage percolation (LPP) model on $\Z^2$ with independent random variables $\{\omega_{i,j},i,j\in\Z\}$ is the following. An \emph{up-right path} $\pi=(\pi(0),\pi(1),\ldots,\pi(n))$ on $\Z^2$ from a point $A$ to a point $E$ is a sequence of points in $\Z^2$ with \mbox{$\pi(k+1)-\pi(k)\in \{(0,1),(1,0)\}$}, with $\pi(0)=A$ and $\pi(n)=E$, and where $n$ is called the length $\ell(\pi)$ of $\pi$. Now, given a set of points $S_A$ and $E$, one defines the last passage time $L_{S_A\to E}$ as
\begin{equation}\label{eq3.2}
L_{S_A\to E}=\max_{\begin{subarray}{c}\pi:A\to E\\A\in S_A\end{subarray}} \sum_{1\leq k\leq \ell(\pi)} \omega_{\pi(k)}.
\end{equation}
Finally, we denote by $\pi^{\rm max}_{S_A\to E}$ any maximizer of the last passage time $L_{S_A\to E}$. For continuous random variables, the maximizer is a.s.\ unique.

TASEP is an interacting particle system on $\Z$ with state space $\Omega=\{0,1\}^\Z$. Here $\eta$ is the occupation variable, which is $1$ at site $j$ if and only if $j$ is occupied by a particle. TASEP has generator $L$ given by~\cite{Li99}
\begin{equation}\label{1.1}
Lf(\eta)=\sum_{j\in\Z}\eta_j(1-\eta_{j+1})\big(f(\eta^{j,j+1})-f(\eta)\big),
\end{equation}
where $f$ are local functions (depending only on finitely many sites) and $\eta^{j,j+1}$ denotes the configuration $\eta$ with the
occupations at sites $j$ and \mbox{$j+1$} interchanged. Notice that for the TASEP the ordering of particles is preserved. That is, if initially one orders from right to left as
\[\ldots < x_2(0) < x_1(0) < 0 \leq x_0(0)< x_{-1}(0)< \cdots,\]
then for all times $t\geq 0$ also $x_{n+1}(t)<x_n(t)$, $n\in\Z$.

Concerning the mapping between TASEP and LPP, the $\omega_{i,j}$ in the LPP is the waiting time of particle $j$ to jump from site $i-j-1$ to site $i-j$.
By definition  $\omega_{i,j}$ are ${\rm exp}(1)$ iid.\ random variables. Let $S_A=\{(u,k)\in\Z^2: u=k+x_k(0), k\in\Z\}$. Then, the relationship between TASEP and LPP is given by
\begin{equation}
\Pb\left(L_{{S_A}\to (m,n)}\leq t\right)=\Pb\left(x_n(t)\geq m-n\right).
\end{equation}
Further, for $m=n$,
\begin{equation}\label{eqLinkLPPtasep}
\Pb\left(L_{{S_A}\to (n,n)}\leq t\right)=\Pb\left(x_n(t)\geq 0\right) = \Pb\left(J(t)\geq n\right),
\end{equation}
where $J(t)$ counts the number of jumps from site $0$ to site $1$ during the time-span $[0,t]$.

To state the variational problem for TASEP, we have to first reformulate the TASEP as
a growth process by introducing the height function $h(j,t)$ which is defined by
\begin{equation}\label{1.11}
h(j,t)=
 \begin{cases}
 2J(t) +\sum^j_{i=1}(1-2\eta_i(t)) & \textrm{for }j\geq 1,\\
 2J(t) & \textrm{for }j=0,\\
 2J(t) -\sum^0_{i=j+1}(1-2\eta_i(t)) & \textrm{for }j\leq -1,
 \end{cases}
\end{equation}
for $j\in\Z$, $t\geq 0$.

The well-studied cases corresponding to $\sigma=0$, resp.\ $\sigma=1$, in (\ref{eq5}), are obtained by choosing periodic initial conditions ($x_j(0)=-2j,j\in\Z$), resp.\ stationary initial conditions~\cite{Lig76} ($\eta_i(0)$, $i\in\Z$, are taken to be i.i.d.\ Bernoulli-$\rho$ random variables, $\rho\in (0,1)$). In this work we consider for simplicity $\rho=1/2$ because in this case the characteristic lines for the associated PDE (Burgers equation) corresponds to the space-time lines with fixed spatial coordinate. Considering other densities does not require any additional technical difficulty, as one  introduces appropriate space-shifts, which only slightly complicates the formulas. Here is the assumption on the initial condition.
\begin{assumpt}\label{assumptBrownianScaling}
The height function $j\mapsto h(j,0)$ weakly (in the uniform topology on bounded sets) converges under a Brownian scaling to a two-sided Brownian motion with drift $0$ and diffusion constant $\sigma^2$. Explicitly,
\begin{equation}\label{eq2.7}
x\mapsto \frac{h([\gamma \ell x],0)}{\sqrt{\ell}} \to \sigma B(x)\quad\textrm{as }\ell\to\infty,
\end{equation}
where $B$ is a standard two-sided Brownian motion. Here $\gamma$ is a model-dependent scaling constant fixed by the property that $\sigma=1$ is the stationary case.
\end{assumpt}

\begin{thm}\label{ThmVarProblem}
Let us define the distribution function
\begin{equation}\label{eq2.8}
F^{(\sigma)}(s):=\Pb\left(\max_{u\in\R} \left[\sqrt{2}\sigma B(u)+ \left({\cal A}_2(u)-u^2\right)\right]\leq s\right).
\end{equation}
Then, under Assumption~\ref{assumptBrownianScaling}, for any $s\in\R$ it holds
\begin{equation}
\lim_{t\to\infty} \Pb\left(h(0,t)\geq t/2-s (t/2)^{1/3}\right)=F^{(\sigma)}(s).
\end{equation}
\end{thm}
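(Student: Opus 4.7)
I would work entirely in the LPP representation. The TASEP--LPP correspondence \eqref{eqLinkLPPtasep} combined with $h(0,t)=2J(t)$ rewrites the event $\{h(0,t)\ge t/2-s(t/2)^{1/3}\}$ as $\{L_{S_A\to(N,N)}\le t\}$ with $N=N(t,s)\simeq t/4-\tfrac12 s(t/2)^{1/3}$, where $S_A=\{(k+x_k(0),k):k\in\Z\}$ encodes the initial profile. Using $L_{S_A\to(N,N)}=\max_{k\in\Z}L_{(k+x_k(0),k)\to(N,N)}$, the natural move is to reparametrize $k$ around $N/2$ (the point where the backward characteristic from $(N,N)$ meets $S_A$, which at $\rho=1/2$ lies on the LPP anti-diagonal) by a zoom variable $u\in\R$, say $k=N/2+\lfloor\alpha u\, t^{2/3}\rfloor$ for a suitable constant $\alpha$. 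Subtracting the law-of-large-numbers value $4N$ and dividing by the KPZ scale $\tau t^{1/3}$, the problem becomes
\begin{equation*}
\frac{L_{S_A\to(N,N)}-4N}{\tau\, t^{1/3}}=\sup_{u}\bigl[I_t(u)+P_t(u)\bigr],
\end{equation*}
where $I_t$ encodes the rescaled initial profile along $S_A$ and $P_t$ is the rescaled point-to-point LPP from a point near $(N/2,N/2)$ to $(N,N)$.

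On a compact window $u\in[-M,M]$, I would establish joint weak convergence $(I_t,P_t)\Rightarrow\bigl(\sqrt{2}\sigma B,\,\mathcal{A}_2(\cdot)-(\cdot)^2\bigr)$ in the uniform topology, with the two limits independent. The term $I_t$ is, up to a deterministic shift, a rescaling of $h(\cdot,0)$, so Assumption~\ref{assumptBrownianScaling} gives $I_t\Rightarrow\sqrt{2}\sigma B$ directly (the $\sqrt{2}$ arising from the conversion between the height-function scaling of the assumption and the LPP coordinates). For $P_t$, the finite-dimensional marginals converge to those of $\mathcal{A}_2(u)-u^2$ by \cite{BP07}, with the parabola coming from the curvature of the LPP shape $(\sqrt{a}+\sqrt{b})^2$ at $a=b=N$; tightness on $[-M,M]$ then follows by combining the horizontal-endpoint tightness of \cite{CP15b} with the functional slow-decorrelation Theorem~\ref{thmSlowDecorrelation}, which transfers tightness from the endpoint-varying setting of \cite{CP15b} to the starting-point-varying setting relevant here. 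Independence of $B$ and $\mathcal{A}_2$ is automatic because $I_t$ is measurable with respect to $\{\eta_j(0)\}$ while $P_t$ depends only on the exponential weights at times $>0$.

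To upgrade from $[-M,M]$ to all of $\R$, I would prove a uniform-in-$t$ tail bound of the form $\Pb\bigl(\sup_{|u|>M}(I_t(u)+P_t(u))>s\bigr)<\eps$ for $M=M(\eps)$. The curvature of the LPP shape contributes a deterministic $-c u^2$ term to $P_t(u)$ for large $|u|$, and this quadratic decay defeats both the $\Or(\sqrt{|u|})$ growth of the Brownian initial condition and the $\Or(1)$ fluctuations of the LPP. Combining the localization with the continuous mapping theorem for $\sup$ on $C([-M,M])$ and sending $M\to\infty$ yields
\begin{equation*}
\Pb\bigl(L_{S_A\to(N,N)}\le t\bigr)\to\Pb\Bigl(\sup_{u\in\R}\bigl[\sqrt{2}\sigma B(u)+\mathcal{A}_2(u)-u^2\bigr]\le s\Bigr)=F^{(\sigma)}(s).
\end{equation*}

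The main obstacle is precisely this last tail step: the off-the-shelf inputs from \cite{BP07}, \cite{CP15b} and \cite{CFP10b} all produce compact-set information only, so one needs quantitative upper-tail moderate-deviation estimates on $P_t(u)$, uniform in $t$, that beat the Brownian growth of $I_t$. In the paper this is realized by combining Theorem~\ref{thmSlowDecorrelation} with one-point LPP deviation bounds along $S_A$. A secondary subtlety is that the additive split $I_t+P_t$ must be genuinely additive and easy to control: the particular choice $S_A=\{(k+x_k(0),k)\}$ is what ensures that the initial contribution along the starting set is literally a rescaling of the height function, so that Assumption~\ref{assumptBrownianScaling} plugs in cleanly — a generic choice of starting set would couple the initial-data and LPP pieces in a nontrivial way.
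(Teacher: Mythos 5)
Your proposal follows essentially the same route as the paper: TASEP--LPP dictionary, functional slow-decorrelation to separate the initial-data contribution from the point-to-point fluctuations, Cator--Pimentel tightness together with the Borodin--P\'ech\'e finite-dimensional convergence to get the Airy$_2$ process on a compact window, and a uniform-in-$t$ localization of the maximizer before sending the window to $\R$. The overall structure and the appeal to Assumption~\ref{assumptBrownianScaling} (with the $\sqrt{2}$ from the $45^\circ$ rotation between height and LPP coordinates) all match.

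One point worth correcting, because it touches the genuinely new technical input. You write that the tail control for $|u|>M$ is ``realized by combining Theorem~\ref{thmSlowDecorrelation} with one-point LPP deviation bounds along $S_A$''. This is not how the paper does it, and a pointwise plan along these lines would run into trouble. The paper proves a stand-alone estimate, Theorem~\ref{thmLocalizazion}, which bounds $\Pb(L_{\mathcal{L}_{kM,\ell}\to(\ell/4,\ell/4)}>\ell+2^{2/3}\ell^{1/3}s)$ for the LPP from an entire translated half-line. Via the LPP--TASEP dictionary this half-line corresponds to a half-occupied step initial condition whose distribution function is an explicit Fredholm determinant (\cite{BFS07}), and Lemma~\ref{lem:moderatedeviations} gives a steepest-descent bound on the rescaled kernel, uniform in $\ell$, of the form $C(kM)^{-2}e^{-c_1(kM)^3-c_2(kM)(\xi_2+s)}$, which after a Hadamard bound and summing over $k$ yields the $e^{-cM}$ decay. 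Bounding a whole half-line at once is what lets the union over $k$ pieces (up to $k=\eps_0\ell^{1/3}$, i.e.\ macroscopic distances where the parabolic approximation no longer applies) be handled cleanly; pointwise moderate deviations at each starting point on the random line would leave a large entropy factor to defeat. Slow-decorrelation plays no role in this localization step at all.

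A second, smaller imprecision: the displayed identity $\frac{L_{S_A\to(N,N)}-4N}{\tau t^{1/3}}=\sup_u[I_t(u)+P_t(u)]$, with $P_t$ depending only on the exponential weights, is not exact. The exact object is $\sup_u L_{I(u)\to(N,N)}$ with $I(u)$ a point on the \emph{random} line, so before slow-decorrelation the ``$P_t$'' part is not measurable with respect to the dynamical weights alone. It is precisely Theorem~\ref{thmSlowDecorrelation} that lets you replace $L_{I(u)\to(N,N)}$ by $L_{I_0(u)\to(N,N)}+4\bigl(m(I(u))-m(I_0(u))\bigr)$ uniformly on $[-M,M]$ up to $o(t^{1/3})$, which is what makes the additive split and the independence claim legitimate. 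You do invoke slow-decorrelation, but framing it purely as a transfer of tightness from the horizontal line to the anti-diagonal understates its role in making the decomposition itself valid.
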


For the TASEP with Bernoulli initial measure, $\sigma=1$, the limiting distribution of the rescaled height function has been identified in \cite{FS05a} with the Baik-Rains distribution $F_{\rm BR}$~\cite{BR00}. Thus we arrive at a variational characterization of the Baik-Rains distribution.
\begin{cor}\label{CorFBR}
The Baik-Rains distribution is determined by
\begin{equation}\label{eqFBR}
F_{\rm BR}(s)= \mathbb{P}\Big( \sup_{x\in\R}\{\sqrt{2} B(x) +\mathcal{A}_2(x) - x^2\} \leq s \Big).
\end{equation}
\end{cor}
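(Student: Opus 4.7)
The plan is to deduce the corollary by matching two different asymptotic descriptions of the same quantity: the rescaled height at the origin for TASEP started from its stationary (Bernoulli-$1/2$) measure.

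First, I would verify that Bernoulli-$1/2$ initial data satisfies Assumption~\ref{assumptBrownianScaling} with $\sigma=1$. With $\{\eta_j(0)\}_{j\in\Z}$ i.i.d.\ Bernoulli$(1/2)$ and $J(0)=0$, the definition~\eqref{1.11} reduces $j\mapsto h(j,0)$ to a symmetric simple random walk with unit-variance i.i.d.\ increments $1-2\eta_j(0)\in\{-1,+1\}$. Donsker's invariance principle then yields weak convergence, in the uniform topology on bounded sets, to a two-sided standard Brownian motion. The normalization constant $\gamma$ in \eqref{eq2.7} is by construction the one that turns this reference case into standard Brownian motion, so Assumption~\ref{assumptBrownianScaling} holds with $\sigma=1$.

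Next, I would apply Theorem~\ref{ThmVarProblem} with $\sigma=1$ to obtain
\[
\lim_{t\to\infty}\Pb\bigl(h(0,t)\geq t/2 - s(t/2)^{1/3}\bigr) = F^{(1)}(s),
\]
which is exactly the right-hand side of~\eqref{eqFBR}. On the other hand, for Bernoulli-$1/2$ initial data the same left-hand side was identified in~\cite{FS05a} with the Baik--Rains distribution $F_{\rm BR}(s)$. Equating the two identifications of the limit gives \eqref{eqFBR}.

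The main point to watch, rather than a genuine analytic obstacle, is the bookkeeping of constants: the centering $t/2$ and the scale $(t/2)^{1/3}$ appearing in Theorem~\ref{ThmVarProblem} must be matched against the conventions of~\cite{FS05a} for density $\rho=1/2$, where the current is $\rho(1-\rho)=1/4$ (so that $\E[h(0,t)]\sim t/2$) and the characteristic speed $\mathsf{j}'(\rho)=1-2\rho$ vanishes (so no spatial shift is needed). Since both works deal with the same continuous-time TASEP, this is merely a consistency check of scaling factors rather than a substantive step.
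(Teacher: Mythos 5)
Your proposal is correct and follows essentially the same route the paper takes: identify the Bernoulli-$1/2$ case as $\sigma=1$ (by the normalization of $\gamma$, with Donsker providing the functional CLT), apply Theorem~\ref{ThmVarProblem} to get $F^{(1)}$, and equate with the Baik--Rains limit already established in \cite{FS05a}. The extra care you take with the scaling constants is a reasonable sanity check but not a separate ingredient.
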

By KPZ universality such a variational formula appears naturally. However a rigorous mathematical proof has not been accomplished so far. In~\cite{CLW16} the LPP with geometric random variables is considered. If for time-stationary initial conditions one would take the limit to continuous times,  Corollary~\ref{CorFBR} would be recovered. This technical step has not been worked out. The stationary measures of the discrete time TASEP are given by a Markov chain~\cite{Yag86,Sch00}. Thus, in addition, the general results in~\cite{CLW16} would have to be applied to an initial condition which is not a product measure. In this way one would obtain the variational formula. Further, one should recover the Baik-Rains distribution for that model. Such an analysis might be achieved in the setting studied in~\cite{SI04b}. The variational expression (\ref{eqFBR}) is written also in~\cite{QR16}, except that $\mathcal{A}_2$ is not rigorously identified as the Airy$_2$ process, although there is no reason to doubt. The respective missing technical steps are explained in Theorem~1.5 of~\cite{QR16} and subsequent remarks.

\begin{remark}
As opposed to considering the reference point $x=0$, one could choose instead the reference point $x=2\xi (t/2)^{2/3}$, which then leads to
\begin{multline}\label{eq2.9}
\lim_{t\to\infty} \Pb\left(h(2\xi (t/2)^{2/3},t)\geq t/2-s (t/2)^{1/3}\right) \\ =\Pb\left(\max_{u\in\R} \left[\sqrt{2}\sigma B(u)+ \left({\cal A}_2(u)-(u-\xi)^2\right)\right]\leq s\right).
\end{multline}
\end{remark}
For fixed $\sigma$, $\xi\mapsto \max_{u\in\R} \left[\sqrt{2}\sigma B(u)+ \left({\cal A}_2(u)-(u-\xi)^2\right)\right]$ can be viewed as a stochastic process.
The $\sigma = 0$ process is known as the Airy$_1$ process~\cite{Sas07,BFPS06} and $\sigma=1$ process as the Airy$_{\rm stat}$ process~\cite{BFP09}. Thus we arrived at a one-parameter family of interpolating processes.
It is  distinct from the flat to stationary transition discussed in (1.27) of~\cite{QR16} or in (56) of~\cite{CLW16}, which is obtained from mixed initial conditions and by varying the reference point. \bigskip

In our Monte Carlo simulations, see Section \ref{sectSimulation}, the initial data are generated by the following Markov chain:
$\{\eta_{j}(0),j\in\Z\}$ is a stationary Markov chain with transition matrix
\begin{equation}
T=\left(
    \begin{array}{cc}
      \alpha & 1-\alpha \\
      1-\alpha & \alpha \\
    \end{array}
  \right).
\end{equation}
The stationary one-point distribution is $\Pb(\eta_j=0)=\Pb(\eta_j=1)=1/2$.

\begin{lem}\label{lemmaBrownianScaling}
The height function obtained through the stationary Markov chain with transition matrix $T$ satisfies Assumption~\ref{assumptBrownianScaling} with $\gamma=1$ and $\sigma^2=\alpha/(1-\alpha)$.
\end{lem}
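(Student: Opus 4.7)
The plan is to reduce the statement to the classical functional central limit theorem for stationary finite-state Markov chains and then compute the limiting diffusion constant explicitly.

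First, I rewrite the initial height function as a partial sum of the centered variables $\xi_i := 2\eta_i(0)-1 \in \{-1,+1\}$. Since $J(0)=0$, the definition (\ref{1.11}) becomes $h(j,0)=-\sum_{i=1}^{j}\xi_i$ for $j\geq 1$ and $h(j,0)=\sum_{i=j+1}^{0}\xi_i$ for $j\leq -1$. With $\gamma=1$ the rescaled process $x\mapsto h([\ell x],0)/\sqrt{\ell}$ is, up to an overall sign (harmless since $B$ and $-B$ have the same law), the rescaled partial-sum process of the stationary chain $(\xi_i)_{i\in\Z}$.

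Second, I would invoke the functional central limit theorem for stationary ergodic finite-state Markov chains. For $\alpha\in(0,1)$ the chain with transition matrix $T$ is irreducible and aperiodic, hence exponentially mixing, and therefore the rescaled partial-sum process converges weakly, uniformly on compact subsets of $\R$, to a two-sided Brownian motion $\sigma B(x)$ whose diffusion constant is given by the Green--Kubo formula
\begin{equation*}
\sigma^{2}=\sum_{k\in\Z}\E[\xi_{0}\xi_{k}].
\end{equation*}
One may either quote a two-sided FCLT directly, or run the standard Poisson-equation martingale-approximation argument (with corrector $g(\xi)=\xi/(2(1-\alpha))$) separately on $x\geq 0$ and $x\leq 0$ and glue the two halves using the Markov property at $0$ together with stationarity of $(\xi_i)_{i\in\Z}$.

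Third, I compute the variance. The matrix $T$ has eigenvalues $1$ and $2\alpha-1$; on the mean-zero subspace the eigenvector is $\xi\mapsto\xi$, giving $\E[\xi_{k}\mid\xi_{0}]=(2\alpha-1)^{k}\xi_{0}$ for $k\geq 0$. Since $\xi_0^2=1$ and $|2\alpha-1|<1$, this yields $\E[\xi_{0}\xi_{k}]=(2\alpha-1)^{|k|}$, and summing the geometric series gives
\begin{equation*}
\sigma^{2}=1+2\sum_{k\geq 1}(2\alpha-1)^{k}=1+\frac{2\alpha-1}{1-\alpha}=\frac{\alpha}{1-\alpha},
\end{equation*}
which proves the lemma with $\gamma=1$ and $\sigma^{2}=\alpha/(1-\alpha)$. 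There is no substantive obstacle in the argument; the only things that need careful bookkeeping are the sign and the factor-of-two conventions relating (\ref{eq2a}) and (\ref{eq2.7}) (which match because $\xi_i=2(\eta_i-1/2)$), and the exclusion of the degenerate boundary cases $\alpha\in\{0,1\}$ in which the chain is either periodic or reducible.
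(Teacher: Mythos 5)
Your proof is correct and takes essentially the same route as the second option the paper sketches — expressing $h(\cdot,0)$ as a partial sum of a (linear) functional of the stationary Markov chain and appealing to a martingale-approximation / Kipnis--Varadhan type FCLT — and it supplies the explicit variance computation $\sigma^2=\sum_{k\in\Z}(2\alpha-1)^{|k|}=\alpha/(1-\alpha)$ that the paper leaves implicit. The only cosmetic difference is that you observe directly that $\xi_j=2\eta_j-1$ is itself Markov, whereas the paper speaks of encoding ``the two last increments'' as the state, but this does not change the substance of the argument.
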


This result can be obtained in various ways. One can use the coupling contained in the proof of Theorem~1 of~\cite{ST84} between a Brownian motion and the correlated random walk, which replaces the standard Skorokhod embedding in the proof of Donsker's theorem for the convergence of random walks to Brownian motions. Another option is to consider the two last increment of the height function as the state of the system and then one recovers the Markov property. Using the fact that the height function is then a linear functional of the Markov process one recovers the result by using the Kipnis-Varadhan Theorem~\cite{KV86}.

\begin{remark}
In \cite{CLW16} the discrete time TASEP is considered. The updates are in parallel and $q$ is the probability of not jumping. The analogue variational formula with $\sigma=q^{-1/4}\in(1,\infty)$ is proved in case the initial condition is product Bernoulli, see (53) in~\cite{CLW16}. This initial condition is not stationary~\cite{Yag86,Sch00}. To reach the full range of $\sigma$, one would have to apply the main theorem to a larger family of random initial data.
\end{remark}

\begin{remark}\label{rem2.7}
One could drop the assumption of translation-invariance for the initial height profile, such that the limit on the right side of (\ref{eq2.7}) will be another non-degenerate limit process, say ${\cal R}$. As soon as the condition (\ref{eqCond1}) in the proof is satisfied for ${\cal R}$ instead of $\sigma B$, then Theorem~\ref{ThmVarProblem} will also hold, where of course $\sqrt{2}\sigma B(u)$ is replaced by $\sqrt{2}{\cal R}(u)$.
\end{remark}

\subsection{Proof of Theorem~\ref{ThmVarProblem}}
The key ingredients to prove Theorem~\ref{ThmVarProblem} are (a) functional slow-decorrelation, (b) bounds that ensures that the optimizer stays within a specific region.
Due to Lemma~\ref{lemmaBrownianScaling}, we know that the random line for the discrete model converges weakly to a Brownian motion as a continuous function. Therefore we know that if we restrict the maximization problem to a region of width $M \ell^{2/3}$ around the origin, the random line will typically deviate from the anti-diagonal of order $\sqrt{M} \ell^{1/3}$. We will prove a functional slow-decorrelation which says that the fluctuations of the LPP problem from the anti-diagonal and those lines at distance $\Or(\ell^{\nu})$ away from it with $\nu<1$, do not exceed $\e \ell^{1/3}$ for any $\e>0$, in the $\ell\to\infty$ limit. This gives control of the LPP from the random line, when restricted to a $M\ell^{2/3}$-neighborhood of the origin.  Finally, we show that the probability that the maximizer starts more than $M \ell^{1/3}$ away from the origin tends to zero as $M$ tends to infinity (we get a bound in $M$ which goes to zero and which is uniform in $\ell$).

One main difference between our proof and the one of the case of geometric random variables of~\cite{CLW16}, is that we do not use the Gibbs property of the associated multilayer model to bound the probability of LPP from small segments through point-to-point probabilities, rather we bound this probability directly using the fact that the kernel for the half-line problem is known.
This simplifies the approach for the estimates of the moderate/large deviations made in~\cite{CLW16}. To prove the functional slow-decorrelation theorem we need a tightness result, which for exponential random variables was recently obtained by Cator and Pimentel with an elegant argument~\cite{CP15b}. Interestingly, the functional slow-decorrelation allows to deduce tightness on other space-time cuts, where the soft-argument does not directly apply.

\begin{proof}[Proof of Theorem~\ref{ThmVarProblem}]
The randomness from the initial conditions and the randomness from the dynamics provide two independent sources of randomness in the system.
From the LPP perspective, these two sources become the randomness in the line from which the last passage time is taken and the randomness of the weights $\omega_{i,j}$'s respectively.  As the LPP only allows up-right paths, these must be contained inside the set ${\cal B}_\ell=\{(m,n)\in\R^2\,|\, m,n\leq \tfrac14 \ell\}$.
\emph{All the sets that defined below are subsets of ${\cal B}_\ell$, but we do not write it explicitly to simplify the notations.}
For instance, here we write ${\cal L}=\{(k+x_k(0),k),k\in\Z\}$ instead of $\{(k+x_k(0),k),k\in\Z\}\cap {\cal B}_\ell$.
Set $t=\ell+2^{2/3}\ell^{1/3} s$, which means that $\ell=t-2^{2/3}t^{1/3}s+o(t^{-1/3})$, and (\ref{eqLinkLPPtasep}) gives
\begin{equation}
\begin{aligned}
\Pb(h(0,t)\geq t/2-s (t/2)^{1/3}) &=\Pb(J(t)\geq \tfrac14 t-s 2^{-4/3}t^{1/3}) \\
&= \Pb(L_{{\cal L}\to (\frac14 t-s 2^{-4/3}t^{1/3},\frac14 t-s 2^{-4/3}t^{1/3})}\leq t) \\
&= \Pb(L_{{\cal L}\to (\ell/4,\ell/4)}\leq\ell+2^{2/3}\ell^{1/3} s) =  \Pb(E_{{\cal L},\ell,s}),
\end{aligned}
\end{equation}
where we define the event
\begin{equation}
E_{{\cal L},\ell,s}=\{L_{{\cal L}\to (\ell/4,\ell/4)}\leq \ell+2^{2/3}\ell^{1/3} s\}.
\end{equation}
For any given $M\geq 1$, we define the domains
\begin{equation}\label{eq3.3}
{\cal D}_{M,\ell}=\{(m,n)\in \R^2\,|\, |m+n|\leq (\ell/2)^{2/3}, |m-n|\leq M (\ell/2)^{2/3}\},
\end{equation}
and ${\cal C}_{M,\ell}={\cal C}_{M,\ell}^+ \cup {\cal C}_{M,\ell}^-$, where
\begin{equation}\label{eq3.4}
{\cal C}_{M,\ell}^-=\bigcup_{k=1}^{\e_0 \ell^{1/3}} \{(m,n)\in\R^2\,|\, n\geq k M (\ell/2)^{2/3}, m+n\geq -\tfrac14 k^2 M^2 (\ell/2)^{1/3}\}
\end{equation}
is the region on the right of ${\cal L}^-_{M,\ell}$, and ${\cal C}_{M,\ell}^+$ is the mirror image of ${\cal C}_{M,\ell}^-$ with respect to the axis $\{m=n\}$; see Figure~\ref{FigLPP2}.
\begin{figure}
\begin{center}
\psfrag{L}[lc]{$\color{blue}{{\cal L}}$}
\psfrag{Lp}[rc]{${\cal L}^+_{M,\ell}$}
\psfrag{Lm}[rc]{${\cal L}^-_{M,\ell}$}
\psfrag{D}[rb]{${\cal D}_{M,\ell}$}
\psfrag{C}[cb]{${\cal C}_{M,\ell}$}
\psfrag{m}[bc]{$m$}
\psfrag{n}[lc]{$n$}
\includegraphics[height=5cm]{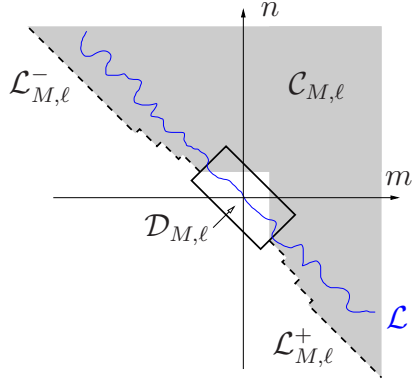}
\caption{Illustration of the main lines and domains used in the proof. The domain ${\cal C}_{M,\ell}$ is the gray one, while ${\cal D}_{M,\ell}$ is the rectangular region. The thick dashed lines are ${\cal L}^\pm_{M,\ell}$.}
\label{FigLPP2}
\end{center}
\end{figure}
For a standard two-sided Brownian motion $B$,
\begin{equation}\label{eqCond1}
\lim_{M\to\infty}\lim_{\ell\to\infty}\Pb(\{(\tfrac12(x+\sigma B(x)),\tfrac12(\sigma B(x)-x)),x\in \R\}\subset {\cal D}_{M,\ell}\cup{\cal C}_{M,\ell})=1.
\end{equation}
This, together with the weak convergence of $\cal L$ to $\sigma B$ (rotated by 45°), see Lemma~\ref{lemmaBrownianScaling} below, implies that
\begin{equation}
\lim_{M\to\infty}\lim_{\ell\to\infty}\Pb({\cal L}\subset {\cal D}_{M,\ell}\cup {\cal C}_{M,\ell}) = 1,
\end{equation}
and consequently
\begin{equation}
\lim_{M\to\infty}\lim_{\ell\to\infty}\Pb(E_{{\cal L},\ell,s})=\Pb(E_{{\cal L},\ell,s} \cap \{{\cal L}\subset {\cal D}_{M,\ell}\cup {\cal C}_{M,\ell}\}).
\end{equation}
For notational simplicity, let us denote by $\Pb_M(\cdot)=\Pb(\cdot \cap \{{\cal L}\subset {\cal D}_{M,\ell}\cup {\cal C}_{M,\ell}\})$.

By definition of the LPP time, $L_{{\cal L}\to (\ell/4,\ell/4)}=\max_{I\in{\cal L}} L_{I\to (\ell/4,\ell/4)}$. The maximum is obtained a.s.\ at a unique point (as the random waiting times have densities). Notice that $E_{{\cal L},\ell,s}$ holds if and only if both $E_{{\cal L}\cap {\cal D}_{M,\ell},\ell,s}$ and $E_{{\cal L}\cap {\cal C}_{M,\ell},\ell,s}$ hold. Theorem~\ref{thmLocalizazion} below implies that for any given $M\geq M_0$ (with $M_0$ a constant)
\begin{equation}\label{eq3.22}
\Pb_M(E_{{\cal L}\cap {\cal C}_{M,\ell},\ell,s}))\geq 1-C e^{-c M}
\end{equation}
uniformly for $\ell\geq \ell_0$, where $C$ and $c$ are constants independent of $\ell$. Therefore
\begin{equation}\label{eq3.24}
\lim_{M\to\infty}\lim_{\ell\to\infty} \Pb_M(E_{{\cal L},\ell,s})= \lim_{M\to\infty}\lim_{\ell\to\infty}\Pb_M(E_{{\cal L}\cap {\cal D}_{M,\ell},\ell,s}).
\end{equation}
For a given $I$, we denote by $I_0$ its orthogonal projection on the line \mbox{$\{(m,n)\in\R^2\,|\, m+n=0\}$}, that is, the anti-diagonal. We now introduce the following coordinates: $I_0=I_0(u)=u (\ell/2)^{2/3} (-1,1)$ and for this $I_0$, we denote by $I=I(u)$ the point on $\cal L$ connected to $I_0(u)$ by a line of slope $1$; see Figure~\ref{FigLPP}.
\begin{figure}
\begin{center}
\psfrag{L}[l]{$\color{blue}{{\cal L}}$}
\psfrag{D}[c]{${\cal D}_{M,\ell}\quad$}
\psfrag{m}[bc]{$m$}
\psfrag{n}[rc]{$n$}
\psfrag{I}[lc]{$I_0(u)$}
\psfrag{Ip}[bl]{$I_+(u)$}
\psfrag{Im}[tc]{$I_-(u)$}
\psfrag{J}[rc]{$I(u)$}
\includegraphics[height=5cm]{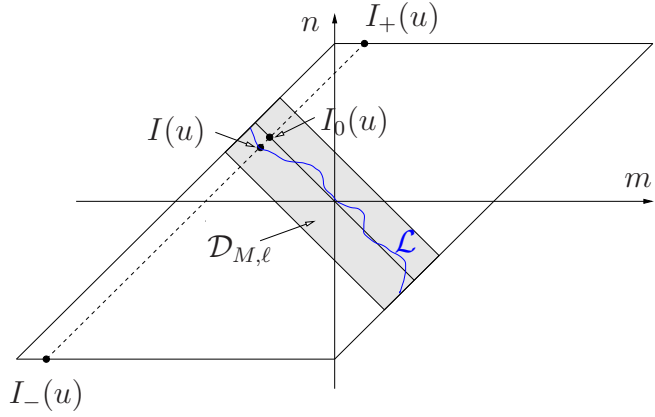}
\caption{LPP with random initial condition ${\cal L}$.}
\label{FigLPP}
\end{center}
\end{figure}
With these notations we can write
\begin{equation}\label{eq3.27}
\begin{aligned}
&E_{{\cal L}\cap {\cal D}_{M,\ell},\ell,s}=\{\max_{|u|\leq M} L_{I(u)\to (\ell/4,\ell/4)}\leq \ell+2^{2/3}\ell^{1/3} s\}\\
&=\{\max_{|u|\leq M} L_{I_0(u)\to (\ell/4,\ell/4)}+ (L_{I(u)\to (\ell/4,\ell/4)}- L_{I_0(u)\to (\ell/4,\ell/4)})\leq \ell+2^{2/3}\ell^{1/3} s\}.
\end{aligned}
\end{equation}

Now we use the functional slow-decorrelation theorem, given in Theorem~\ref{thmSlowDecorrelation} below: for any $\e>0$, the good event
\begin{equation}
G_{{\cal L},M,\ell,\e}=\{\max_{|u|\leq M} |L_{I(u)\to (\ell/4,\ell/4)}-L_{I_0(u)\to (\ell/4,\ell/4)}-4 m(I(u))+4 m(I_0(u))|\leq 2^{2/3} \ell^{1/3}\e\}
\end{equation}
occurs with probability $1$ as $\ell\to\infty$. Therefore for any $\e>0$
\begin{equation}\label{eq3.29}
(\ref{eq3.24})=\lim_{M\to\infty}\lim_{\ell\to\infty} \Pb_M(E_{{\cal L}\cap {\cal D}_{M,\ell},\ell,s}\cap G_{{\cal L},M,\ell,\e}).
\end{equation}
Further,
\begin{equation}\label{eq3.30}
\begin{aligned}
&\Pb_M(E_{{\cal L}\cap {\cal D}_{M,\ell},\ell,s}\cap G_{{\cal L},M,\ell,\e})\\
&= \Pb_M(\{\max_{|u|\leq M} L_{I(u)\to (\ell/4,\ell/4)}\leq \ell+2^{2/3}\ell^{1/3} s\}\cap G_{{\cal L},M,\ell,\e})\\
&\leq \Pb_M\Big(\{\max_{|u|\leq M} \frac{L_{I_0(u)\to (\ell/4,\ell/4)}-\ell+4m(I(u))-4 m(I_0(u))}{2^{2/3} \ell^{1/3}}\leq s+\e\}\cap G_{{\cal L},M,\ell,\e}\Big).
\end{aligned}
\end{equation}
A lower bound to $\Pb_M(E_{{\cal L}\cap {\cal D}_{M,\ell},\ell,s}\cap G_{{\cal L},M,\ell,\e})$ is obtained by replacing $\e$ by $-\e$ in the same way.
By Lemma~\ref{lemmaBrownianScaling}, $\cal L$ converges weakly to a Brownian motion path with diffusivity constant $\sigma^2$ as $\ell\to\infty$, i.e., to $\sigma B$. An elementary calculation gives
\begin{equation}
\lim_{\ell\to\infty}\frac{4m(I(u))-4m(I_0(u))}{2^{2/3} \ell^{1/3}}=\sqrt{2}\sigma B(u).
\end{equation}
Further, $\frac{L_{I_0(u)\to (\ell/4,\ell/4)}-\ell}{2^{2/3} \ell^{1/3}}$ as a stochastic process in $u\in[M,M]$ converges weakly to ${\cal A}_2(u)-u^2$ (with ${\cal A}_2$ being the Airy$_2$ process~\cite{PS02,Jo03}), see Corollary~\ref{corTightness} (alternatively, one could consider $I$ to be the projection to the $m$-axis directly, with the only difference that the difference of the LPP from $I(u)$ and from $I_0(u)$ is now a Brownian motion with a drift, exactly compensating the linear term in (\ref{eq3.19b})). Thus
\begin{equation}
\begin{aligned}
\lim_{M\to\infty}\lim_{\ell\to\infty}(\ref{eq3.30}) &= \lim_{M\to\infty}\Pb\Big(\max_{|u|\leq M}\left[ {\cal A}_2(u)-u^2+\sqrt{2}\sigma B(u)\right]\leq s+\e\Big)\\
&=\Pb\Big(\max_{u\in\R} \left[{\cal A}_2(u)-u^2+\sqrt{2}\sigma B(u)\right]\leq s+\e\Big),
\end{aligned}
\end{equation}
where the last equality is due to the fact that distributions of the positions of the maximum of ${\cal A}_2(u)-u^2/2$ and $\sqrt{2}\sigma B(u)-u^2/2$ are tight (for the Airy$_2$ case, see Proposition 4.4 of~\cite{CH11} or Proposition 2.13 in~\cite{CLW16}, while for the Brownian motion it is obvious as we can control it with a Brownian motion with a drift away from the origin; see~\cite{Gro85,Gro89,Gro10} for explicit formulas). This, together with the analogous lower bound holds for any $\e>0$. Thus we have shown the convergence in distribution in Theorem~\ref{ThmVarProblem}. The limiting distribution function in (\ref{eq2.8}) is continuous as the Airy$_2$ process is locally Brownian (use inequalities (103)-(104) and the localizing bound (88) in~\cite{CLW16}).
\end{proof}

In the next theorem we show that the LPP from ${\cal C}_{M,\ell}$ is typically lower than the one from the origin, from which one can prove that the maximizer is localized to occur in ${\cal D}_{M,\ell}$. The probability that this does not occur goes to zero as $M\to\infty$ as expected, since ${\cal C}_{M,\ell}$ stays on the right of the constant limit shape.
\begin{thm}\label{thmLocalizazion}
Let ${\cal C}_{M,\ell}$ defined just after (\ref{eq3.3}) and let ${\cal L}_{M,\ell}=\partial {\cal C}_{M,\ell}$ be the boundary of it. For any fixed $s\in\R$, there is a $M_0>0$ such that for all given $M>M_0$, there exists a $\ell_0\in (0,\infty)$ such that
\begin{equation}
\Pb(L_{{\cal L}_{M,\ell}\to(\ell/4,\ell/4)}< \ell+2^{2/3}\ell^{1/3}s)> 1-C e^{-c M}
\end{equation}
uniformly for $\ell\geq \ell_0$. The constants $C$ and $c$ are $\ell$-independent.
\end{thm}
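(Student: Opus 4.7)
The plan is to decompose the staircase boundary $\mathcal{L}_{M,\ell}$ according to the level index $k$ appearing in the union defining $\mathcal{C}_{M,\ell}^\pm$, bound the LPP from each level by a moderate-deviation estimate for LPP starting on a horizontal half-line, and combine everything via a union bound over $k$ (with the dominant contribution coming from $k=1$).

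By reflection across $\{m=n\}$ it suffices to treat $\mathcal{L}^-_{M,\ell}$. The lower-left boundary of $\mathcal{C}^-_{M,\ell}$ is a staircase: at each level $k\in\{1,\ldots,\e_0\ell^{1/3}\}$ it contains a horizontal segment on $\{n=n_k\}$ with $n_k:=kM(\ell/2)^{2/3}$ and an anti-diagonal segment on $\{m+n=c_k\}$ with $c_k:=-\tfrac14 k^2M^2(\ell/2)^{1/3}$, meeting at the corner $p_k:=(c_k-n_k,\,n_k)$. Since LPP is monotone under moving the starting point down or to the left, the LPP from both pieces at level $k$ is dominated by the LPP from the infinite horizontal half-line $\tilde{H}_k:=\{(m,n_k):m\leq c_k-n_k\}$ through $p_k$. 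Using the exponential shape function $\E[L_{(0,0)\to(N_1,N_2)}]=(\sqrt{N_1}+\sqrt{N_2})^2$, a Taylor expansion with $(N_1,N_2)=(\ell/4-(c_k-n_k),\,\ell/4-n_k)$ yields
\[
\E\bigl[L_{p_k\to(\ell/4,\ell/4)}\bigr]=\ell-\tfrac32 k^2M^2(\ell/2)^{1/3}+O(k^3M^3),
\]
uniformly for $k\leq \e_0\ell^{1/3}$ when $\e_0$ is small. Thus the typical LPP from $\tilde{H}_k$ lies below $\ell$ by a margin of order $k^2M^2\ell^{1/3}$, which for large $M$ dominates the target margin $2^{2/3}\ell^{1/3}s$.

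To convert this gap into a probability bound, one uses the explicit Fredholm-determinant representation for the distribution of LPP from a horizontal half-line to a point (the ``kernel for the half-line problem'' referred to in Section~\ref{sectVarFormula}). A steep-descent analysis of this kernel yields the upper-tail bound
\[
\Pb\bigl(L_{\tilde{H}_k\to(\ell/4,\ell/4)}-\E[L_{p_k\to(\ell/4,\ell/4)}]>r\ell^{1/3}\bigr)\leq C e^{-cr^{3/2}},\qquad r\geq 1,
\]
uniformly in $\ell\geq\ell_0$ and in $k$ in the allowed range. Applied with $r$ of order $k^2M^2$, this gives $\Pb(L_{\tilde{H}_k\to(\ell/4,\ell/4)}>\ell+2^{2/3}\ell^{1/3}s)\leq Ce^{-ck^3M^3}$. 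A union bound over $k$ and over the symmetric $\mathcal{L}^+_{M,\ell}$-contribution then produces
\[
\Pb\bigl(L_{\mathcal{L}_{M,\ell}\to(\ell/4,\ell/4)}\geq \ell+2^{2/3}\ell^{1/3}s\bigr)\leq 2\sum_{k\geq 1}Ce^{-ck^3M^3}\leq C'e^{-cM^3}\leq C'e^{-cM},
\]
for $M\geq M_0$ large enough to absorb the $s$-dependence.

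The main obstacle is securing the upper-tail estimate uniformly throughout $1\leq k\leq \e_0\ell^{1/3}$. Near the upper end of this range, $kM$ becomes comparable to $\ell^{1/3}$ and the starting corner $p_k$ approaches the edge of the KPZ scaling window for $(\ell/4,\ell/4)$, so one must carry out the saddle-point analysis of the half-line kernel across the full range or combine it with a separate large-deviation bound outside the scaling window. Choosing $\e_0$ sufficiently small keeps $p_k$ strictly inside the backward cone of $(\ell/4,\ell/4)$ and controls the Taylor remainder in the shape function. The theorem records only the weaker $e^{-cM}$ decay, but this argument in fact delivers the stronger $e^{-cM^3}$.
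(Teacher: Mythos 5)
Your overall strategy mirrors the paper's — decompose the staircase by level $k$, obtain a moderate-deviation estimate for each level, and sum a union bound over $k$ — but there is a genuine gap in the step that reduces the LPP at level $k$ to LPP from a horizontal half-line.

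You dominate the staircase piece at level $k$ by the horizontal half-line $\tilde{H}_k=\{(m,n_k):m\le c_k-n_k\}$ and claim the upper-tail bound
\[
\Pb\bigl(L_{\tilde{H}_k\to(\ell/4,\ell/4)}-\E[L_{p_k\to(\ell/4,\ell/4)}]>r\ell^{1/3}\bigr)\le Ce^{-cr^{3/2}}.
\]
This inequality cannot hold. Literally, $\tilde{H}_k$ is an infinite ray going to $m\to-\infty$ (the box $\mathcal{B}_\ell=\{m,n\le\ell/4\}$ clips nothing on the left), so $L_{\tilde{H}_k\to E}=+\infty$ a.s. Even after restricting to the finite segment needed to dominate the staircase piece (down to $m=c_k-n_{k+1}$), the left endpoint $(c_k-n_{k+1},n_k)$ has shape-function mean $(\sqrt{\ell/4-c_k+n_{k+1}}+\sqrt{\ell/4-n_k})^2=\ell+\Theta(M\ell^{2/3})$ for small $k$; this is far above the target level $\ell+s\,\ell^{1/3}$, so the probability on the left side of your estimate is in fact close to $1$, not exponentially small. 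The reason is that projecting anti-diagonal points down to a fixed height $n_k$ is monotone for LPP but is \emph{not} level-set preserving for the shape function: pushing points toward the lower-left corner of the box inflates the mean, and your dominating set leaves the region in which the mean stays below $\ell$. Your own worry about the upper end $kM\sim\ell^{1/3}$ of the $k$-range is real but secondary; the estimate already fails at $k=1$.

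The paper avoids this by keeping the anti-diagonal structure: for each $k$ it keeps the ray $\{m+n=c_k,\,n\ge n_k\}$, along which the shape function is maximized exactly at the corner $p_k$ and stays below $\ell$ by order $(kM)^2\ell^{1/3}$. It then maps this ray to the TASEP initial condition ``every second site occupied below a threshold, empty above,'' uses the explicit Fredholm kernel of Borodin-Ferrari-Sasamoto for that step-like half-line problem, and proves the rescaled-kernel bound of Lemma~\ref{lem:moderatedeviations} by a steepest-descent argument carried out uniformly for $1\le k\le\eps_0\ell^{1/3}$; Hadamard's inequality then bounds the Fredholm series. To salvage your argument you would need either to keep the anti-diagonal ray (and thus exactly the paper's kernel analysis), or to dominate instead by a set that stays within a shape-function level set and for which a uniform-in-$k$ upper tail is available — neither of which the horizontal half-line provides. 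Your Taylor expansion of $\E[L_{p_k\to E}]$ and the final summation over $k$ are fine; the reduction step is what needs to be reworked.
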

\begin{proof}
Denote by
\begin{equation}
{\cal L}_{kM,\ell}=\partial \{(m,n)\in\R^2\,|\, n > k M (\ell/2)^{2/3}, m+n > -\tfrac14 k^2 M^2 (\ell/2)^{1/3}\}.
\end{equation}
Symmetry and union bounds give
\begin{equation}
\Pb(L_{{\cal L}_{M,\ell}\to(\ell/4,\ell/4)}> \ell+2^{2/3}\ell^{1/3}s)\leq 2 \sum_{k=1}^{\e_0\ell^{1/3}} \Pb(L_{{\cal L}_{kM,\ell}\to(\ell/4,\ell/4)} > \ell+2^{2/3}\ell^{1/3}s). \label{eq:LPPsum}
\end{equation}

In order to study the right side of the above equation,  recall the mapping between LPP and TASEP as given in~\eqref{eqLinkLPPtasep}.  Since ${\cal L}_{kM,\ell}\to (\ell/4,\ell/4)$ is a half-line  with slope $-1$ taking values in the second quadrant, this LPP corresponds to TASEP with an initial condition in which particles occupy every second position below a threshold and there are no particles above the threshold.
The distribution function of this TASEP is given by a Fredholm determinant with an explicit kernel derived in~\cite{BFS07}. Namely, from~\cite[Proposition 3]{BFS07}, we have the formula
\begin{equation}
\Pb (x_n (t) \geq a) = 1- \det(\Id - \chi_a K_{n,t} \chi_a)_{\ell^2 (\Z)} \label{eq:Fredholm}
\end{equation}
where $\chi_a(x)=\Id_{[x<a]}$ and
\begin{equation}
K_{n,t} (y_1,y_2) = \frac{1}{(2 \pi \I )^2} \oint_{\Gamma_0} \hspace{-0.5em}\mathrm{d}v \oint_{\Gamma_{-1,-1-v}} \hspace{-2.5em}\mathrm{d}u \frac{e^{-vt} (1+v)^{y_2+n} }{v^n}  \frac{e^{ut} u^n}{(1+u)^{y_1+n+1}} \frac{1+2v}{(u-v)(1+u+v)} \label{eq:kernel}
\end{equation}
where $-\Gamma_0-1$ is contained inside $\Gamma_{-1,-1-v}$. Note that this kernel is equal to zero if $y_1 \leq -2n-1$. Indeed, in this case the only pole for $u$ is at $u=-1-v$ and the integration over this simple poles leads to an integrand in $v$ which does not have a pole at $0$ anymore. To find the specific formula for this theorem, notice that by a translation of the LPP, the  summand on the right side of~\eqref{eq:LPPsum}
gives
\begin{equation}
\begin{aligned}
&\Pb(L_{{\cal L}_{kM,\ell}\to(\ell/4,\ell/4)}> \ell+2^{2/3}\ell^{1/3}s)\\&= \Pb(L_{{\cal L}_{0,0}\to(\ell/4+(k M)(\ell/2)^{2/3} +\frac{1}{4} (kM)^2(\ell/2)^{1/3} ,\ell/4-(k M)(\ell/2)^{2/3})}>  \ell+2^{2/3}\ell^{1/3}s).\label{eq:LPPtranslation}
\end{aligned}
\end{equation}
Using the TASEP and LPP mapping, ~\eqref{eqLinkLPPtasep}, we have
\begin{equation}
\begin{aligned} \label{eq:lpptoTASEP}
&\Pb(L_{{\cal L}_{0,0}\to(\ell/4+(k M)(\ell/2)^{2/3} + \frac{1}{4} (kM)^2(\ell/2)^{1/3} ,\ell/4-(k M)(\ell/2)^{2/3})}>  \ell+2^{2/3}\ell^{1/3}s) \\
&=\Pb(x_{\ell/4 -(kM)(\ell/2)^{2/3} }(\ell+2^{2/3} \ell^{1/3} s) < 2 (kM)(\ell/2)^{2/3} + \tfrac{1}{4} (k^2 M^2)(\ell/2)^{1/3}).
\end{aligned}
\end{equation}
The above equation indicates the  scalings
\begin{equation}
\begin{aligned} \label{eq:scaling}
t&= \ell+2^{2/3} \ell^{1/3} s\\
n&= \ell/4- (kM)(\ell/2)^{2/3} \\
y_i&= 2 (k M) (\ell/2)^{2/3} +\tfrac{1}{4}(k^2 M^2)(\ell/2)^{1/3} -2 (\ell/2)^{1/3} \xi_i.
\end{aligned}
\end{equation}
with $\xi_1,\xi_2 \in [0, \infty)$, where the latter condition is from the indicator function in~\eqref{eq:Fredholm}. We define the rescaled and conjugated kernel to be
\begin{equation}
K^{\mathrm{resc}} (\xi_1,\xi_2)= 2^{y_2-y_1+1} \left(\ell/2\right)^{1/3} K_{n,t} (y_1,y_2)
\end{equation}
with $n$ and $t$ given in~\eqref{eq:scaling}.

To conclude the proof of the theorem, we need bounds on $K^{\mathrm{resc}}$, which are given in Lemma~\ref{lem:moderatedeviations} below.
 Set $a= 2(kM)(\ell/2)^{2/3}+\tfrac14(k^2M^2)(\ell/2)^{1/3}$.  We expand the Fredholm determinant associated to the right side of~\eqref{eq:lpptoTASEP} which gives
\begin{equation}
\sum_{m=0}^\infty \frac{(-1)^m}{m!} \sum_{z_1=-\infty}^{\infty} \dots\sum_{z_m=-\infty}^{\infty} \prod_{i=1}^m \chi_a(z_i) \det \left[ K_{t,n}(z_i,z_j) \right]_{1 \leq i,j \leq m}.
\end{equation}
Now we consider the change of variable $z_i=a-2\xi_i (\ell/2)^{1/3}$ and extend the rescaled kernel as piecewise constant so that we can write the Riemann sums as integrals. The cut-off by $\chi_a(z_i)$ corresponds to have $\xi_i\geq 0$. Thus we get
\begin{equation} \label{eq:fredholmexp}
\sum_{m=0}^\infty \frac{(-1)^m}{m!} \int_0^{\infty} \mathrm{d}\xi_1 \dots \int_0^{\infty} \mathrm{d}\xi_m
\det \left[ K^{\mathrm{resc}}(\xi_i,\xi_j) \right]_{1 \leq i,j \leq m}.
\end{equation}

Let us choose $M$ large enough (such that $|s|<M$), then from Lemma~\ref{lem:moderatedeviations}, we factor  out  $e^{-c(k M)^3} e^{-\xi_i}$ from the above determinant. Using Hadamard's bound in the standard way we can conclude that~\eqref{eq:fredholmexp} is bounded above by $Ce^{-ck^3 M^3}<Ce^{-ckM}$.  Therefore, the right side of~\eqref{eq:LPPsum} is bounded above by
\begin{equation}
\sum_{k=1}^{\eps_0 \ell^{1/3}}C e^{-ckM} \leq C e^{-cM}(1-e^{-cM})^{-1}\leq 2C e^{-cM}
\end{equation}
for $M$ large enough. Since the estimate found in  Lemma~\ref{lem:moderatedeviations} is uniform, the above bound is uniform in $\ell$.
\end{proof}

\begin{lem} \label{lem:moderatedeviations}
Let $M\in (0,\infty)$ be given and $|s|\leq M$. Then, there exists an $\ell_0\in(0,\infty)$ such that for all $\ell\geq \ell_0$ the estimate
\begin{equation}
|K^{\mathrm{resc}} (\xi_1,\xi_2)|\leq\frac{C}{(k M)^2} e^{-c_1(k M)^3-c_2(k M)(\xi_2+s)}
\end{equation}
holds for all $k \in \{1, \dots, \eps_0 \ell^{1/3}\}$ and for all $\xi_1,\xi_2 \geq 0$, where $C,c_1,c_2>0$ are constants which do not depend on $\ell$.
\end{lem}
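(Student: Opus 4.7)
The plan is a steepest-descent analysis of the double contour integral (\ref{eq:kernel}) under the moderate-deviation scaling (\ref{eq:scaling}), tracking the dependence on $\kappa := kM$ at every step. The starting observation is that $F(z) := -z + \tfrac14 \ln(1+z) - \tfrac14 \ln z$ has a triple saddle at $z = -\tfrac12$, since $F'(-\tfrac12) = F''(-\tfrac12) = F^{(4)}(-\tfrac12) = 0$ while $F'''(-\tfrac12) = 8$. The scalings put the leading parts of the $v$- and $u$-exponents in the form $+\ell F(v)$ and $-\ell F(u)$ respectively, so after the Airy change of variables $v = -\tfrac12 + W(\ell/2)^{-1/3}$, $u = -\tfrac12 + Z(\ell/2)^{-1/3}$ and combining with the Jacobian $\D v\,\D u = (\ell/2)^{-2/3}\D W\,\D Z$, the rescaled rational prefactor, and the conjugation factor $2^{y_2 - y_1 + 1}$ (which precisely cancels the $\ln 2$ terms generated by evaluating $\ln(1+v)$ and $\ln(1+u)$ at $-\tfrac12$), one obtains
\begin{equation*}
K^{\mathrm{resc}}(\xi_1, \xi_2) = \frac{1}{(2\pi\I)^2}\int\!\!\int \frac{2W}{(Z-W)(W+Z)}\,\exp\bigl\{g_{\xi_2,s}(W) - g_{\xi_1,s}(Z) + r_\ell(W,Z)\bigr\}\,\D W\,\D Z,
\end{equation*}
with the cubic $g_{\xi, s}(W) := \tfrac{8}{3}W^3 - 4\kappa W^2 + W\bigl(\tfrac12 \kappa^2 - 4\xi - 2s\bigr)$ and a remainder $r_\ell = O((|W|+|Z|)^4(\ell/2)^{-2/3}) + O((|W|+|Z|)(\ell/2)^{-1/3})$ that is uniformly small on the window $|W|, |Z| \leq \delta(\ell/2)^{1/3}$ provided $\varepsilon_0$ in $k \leq \varepsilon_0 \ell^{1/3}$ is small enough.

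Next I would identify the two critical points $W_c^\pm(\xi, s) = \tfrac12\bigl(\kappa \pm \sqrt{\tfrac34\kappa^2 + 2\xi + s}\bigr)$ of $g_{\xi,s}$; for the parameter range in the lemma these are real and of size $\Theta(\kappa)$, with $g_{\xi,s}''(W_c^+) = +4\sqrt 3\,\kappa + O(1)$. Deform the $v$-contour (a loop around $0$) to the vertical line through $W_c^+(\xi_2, s)$, which is a steepest-descent direction and leaves $v=0$ enclosed because $W_c^+ > 0$. Deform the $u$-contour (a loop around $\{-1, -1-v\}$) to the vertical line through $Z_c^-(\xi_1, s) := W_c^-(\xi_1, s)$; both poles, which in $Z$-coordinates sit at values $\leq -W_c^+ < 0$, remain enclosed. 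On these contours $\Re\, g_{\xi,s}(W)$ decreases quadratically away from the saddle with Gaussian width $\Theta(\kappa^{-1/2})$, and analogously for $Z$. Evaluating directly, $g_{\xi_2,s}(W_c^+) - g_{\xi_1,s}(Z_c^-) \leq -c_1 \kappa^3 - c_2'\,\kappa(\xi_2+s)$ uniformly in $\xi_1 \geq 0$: at $\xi_1=\xi_2=s=0$ a short computation yields $-(5+3\sqrt 3)/12 - (3\sqrt 3 - 5)/12 = -\sqrt 3/2$ times $\kappa^3$; the envelope identities $\partial_\xi g(W_c^+) = -4W_c^+$ and $\partial_s g(W_c^+) = -2W_c^+$ supply the linear decay in $\xi_2$ and $s$; and the only possibly adverse contribution from $\xi_1$, which occurs while $Z_c^-(\xi_1) \geq 0$ (i.e.\ $\xi_1 \lesssim \kappa^2/8$), is harmless because $-g_{\xi_1,s}(Z_c^-(\xi_1))$ attains its supremum over $\xi_1 \geq 0$ at the point $Z_c^- = 0$ with value $-g(0) = 0$, so the full $\kappa^3$ gain from $g_{\xi_2,s}(W_c^+)$ survives intact.

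The Gaussian integration in $W$ and $Z$ contributes a factor $\Theta(\kappa^{-1})$ from the two widths $\Theta(\kappa^{-1/2})$, and the rational prefactor at the saddles satisfies $|2W_c^+/[(Z_c^- - W_c^+)(W_c^+ + Z_c^-)]| = \Theta(\kappa^{-1})$ since $W_c^+ + Z_c^- = \kappa + O(\xi + s)$. Multiplied together they produce the claimed $C/(kM)^2$ prefactor. The main obstacle is making the contour deformation rigorous and uniform in $k\in\{1,\ldots,\varepsilon_0\ell^{1/3}\}$: one truncates the vertical contours at $|\Im W|, |\Im Z| \leq \delta(\ell/2)^{1/3}$ so $v, u$ stay near $-\tfrac12$ where the Taylor expansion is valid with $r_\ell = o(1)$, and controls the tails using the exact form of $\Phi_v, \Phi_u$, exploiting that $\Re F(z) \to -\infty$ along the imaginary direction away from $-\tfrac12$. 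These verifications are routine but tedious, in the style of the steepest-descent estimates for BFS-type kernels in~\cite{BFS07}.
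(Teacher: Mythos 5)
Your reduction of the exponent to the cubic $g_{\xi,s}(W)=\tfrac83 W^3-4\kappa W^2+W(\tfrac12\kappa^2-4\xi-2s)$ under the Airy rescaling $v=-\tfrac12+W(\ell/2)^{-1/3}$ is correct, and so are the saddle locations, the value $g_{0,0}(W_c^+)-g_{0,0}(W_c^-)=-\tfrac{\sqrt3}{2}\kappa^3$, and the observation that $\sup_{\xi_1\geq 0}\bigl(-g_{\xi_1,s}(Z_c^-(\xi_1))\bigr)=0$ is attained at $Z_c^-=0$. Note, however, that the paper deliberately does \emph{not} go through the critical points: it keeps the contours at the fixed circles $\Gamma_0=\{|v|=\tfrac12-\varepsilon\}$ (passing through $W=\kappa$, not through $W_c^+\approx 0.93\kappa$) and $\Gamma_{-1,-1-v}=\{|u+1|=\tfrac12\}$ (passing through $Z=0$), accepting a slightly less optimal constant (the paper's exponent is $-\tfrac56\kappa^3$ versus your $-\tfrac{\sqrt3}{2}\kappa^3$). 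That choice is what makes the whole argument uniform and short; your route is sharper but creates two problems you have not resolved.

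First, the vertical-line contours do not close up. The $v$-contour is a compact loop around $v=0$, and along $\Re v=-\tfrac12+W_c^+(\ell/2)^{-1/3}$ with $\Im v\to\pm\infty$ the leading exponent satisfies $\Re g_0(v)\to 2+O(\varepsilon)$, so $\bigl|e^{\ell g_0(v)/4}\bigr|\sim e^{\ell/2}$; there is no decay at infinity. Your justification that ``$\Re F(z)\to -\infty$ along the imaginary direction away from $-\tfrac12$'' is false: $\Re F(-\tfrac12+\I\tau)\to \tfrac12$ as $\tau\to\pm\infty$ (the logs cancel to leading order and $\Re(-z)=\tfrac12$). So one cannot simply ``deform to a vertical line and truncate''; you must keep a closed contour, as the paper does, and bound the part away from $-\tfrac12$ separately (which the paper does on $\Sigma$ using that $\Re g_0$ is monotone along the circle).

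Second, the $u$-contour must enclose the pole $u=-1-v$, i.e.\ $Z=-W$ with $\Re(-W)=-W_c^+(\xi_2,s)$. Your deformed vertical line at $\Re Z = Z_c^-(\xi_1,s)$ stays to the right of this pole only if $Z_c^-+W_c^+>0$. But $Z_c^-(\xi_1)\to-\infty$ as $\xi_1\to\infty$ while $W_c^+(\xi_2)$ is fixed, so for $\xi_1\gtrsim \kappa^2$ (more precisely once $r_1=\tfrac12\sqrt{\tfrac34\kappa^2+2\xi_1+s}>\kappa+r_2$) you have crossed the pole and the deformation is invalid; at the crossing the quantity $W_c^++Z_c^-$ you divide by in the prefactor passes through $0$. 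The lemma requires the bound uniformly in $\xi_1,\xi_2\geq 0$, so this is not a negligible boundary case. You would either need to split into regimes (saddle contour only for $\xi_1$ below a threshold, a fixed contour — or a vertical line pinned at, say, $Z=-W_c^+/2$ — above it), or simply adopt the paper's fixed $u$-circle through $Z=0$, which sidesteps the issue entirely while still leaving the full $\kappa^3$ gain from the $v$-side intact.
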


\begin{proof}[Proof of Lemma~\ref{lem:moderatedeviations}]
For convenience, set $\varepsilon=\varepsilon(k,M,\ell)= kM(\ell/2)^{-1/3}$.

The proof of this result is via a steepest descent analysis. We remark that we do not deform the contours of integration to pass through the critical points. It is more convenient to deform the contours to be relatively close to the critical point giving a less optimal estimate, but this is sufficient for our purposes.

Define
\begin{equation}\label{eq:g0}
g_0 (v) = -4 v+ (1+ 2\varepsilon +\varepsilon^2/2) \log 2(1+v) -(1-2 \varepsilon )\log v.
\end{equation}
Then, with the scalings given in~\eqref{eq:scaling}, we have
\begin{equation}\label{eq:g0expand}
\log \left(\frac{e^{-vt} (2(1+v))^{y_2+n} }{v^n} \right) = \frac{\ell}{4} g_0(v) -2\left( \frac{\ell}{2}\right)^{1/3} \xi_2 \log (2(1+v)) -
2\left( \frac{\ell}{2}\right)^{1/3} sv.
\end{equation}
We have a similar expression for $\log (e^{-ut}u^n/ (2(1+u))^{y_1+n}) $.

Define $C_0=\{v:|v|=1/2-\eps\}$, $C_1=\{u:|u+1|=1/2\}$ and set $\Gamma_0=C_0$, $\Gamma_{-1,-1-v}=C_1$. Further, set $C_0^{\delta}=\{v\in C_0:|v+1/2-\eps|<\delta \}$ and $C_1^{\delta}=\{u\in C_1:|u+1/2|<\delta\}$.  Then, we can write $C_0\cup C_1= C_0^\delta\cup C_1^{\delta}+\Sigma$, where $\Sigma$ is the remaining contour.
Notice that for $v\in \Sigma$, $\mathrm{Re}( g_0(v)-g_0(-1/2+\eps)) \leq -c_3$ for some $c_3=c_3(\delta)$ because $\mathrm{Re}(v)$ increases when moving away from $-1/2+\eps$ along the contour $C_0$ and the same is true for $\log|1+v|$.   By a similar reasoning, we have $\mathrm{Re} (g_0(u)-g_0(-1/2)) \geq c_3$.
We also have  for $u \in C_1$ and $v \in C_0$,
\begin{equation}\label{eq:prefactorbound}
\left|\frac{1+2v}{(u-v)(1+u+v)}\right| \leq \frac{C}{\eps}.
\end{equation}
Using the above bound and observations, we conclude that
\begin{equation}
\begin{aligned}
&\left(\frac{\ell}{2}\right)^{1/3}\left| \frac{4}{(2 \pi \I )^2} \iint_{\Sigma}\mathrm{d}u \, \mathrm{d} v  \frac{e^{-vt} (2(1+v))^{y_2+n} }{v^n}  \frac{e^{ut} u^n}{(2(1+u))^{y_1+n+1}} \frac{1+2v}{(u-v)(1+u+v)} \right|  \\
&\leq C \frac{\ell^{1/3}}{\eps} e^{-c_3 \ell/2}
\end{aligned}
\end{equation}
for $\ell$ large enough. This is because the smaller terms in the exponential in $\ell$ are bounded by $\ell c_3$, which dominates the $\ell^{1/3}$ terms.

For the integral over $C_k^\delta$'s, using $|u+1|=1/2$, we have
\begin{equation}\label{eq:firstbound}
\begin{aligned}
&\left|\frac{ 4\left(\frac{\ell}{2} \right)^{1/3}}{(2 \pi \I )^2} \int_{C_1^{\delta}}  \mathrm{d} u \int_{C_0^{\delta}} \mathrm{d}v \frac{e^{-vt}( 2(1+v))^{y_2+n} }{v^n}  \frac{e^{ut} u^n}{(2(1+u))^{y_1+n+1}} \frac{1+2v}{(u-v)(1+u+v)} \right| \\
&\leq \frac{\left(\frac{\ell}{2} \right)^{1/3}}{\pi^2}
 \int_{C_1^{\delta}} \!\!\! \mathrm{d} u \int_{C_0^{\delta}} \!\!\! \mathrm{d}v \left| e^{\frac{\ell}{4}(g_0(v)-g_0(u))}e^{-2\left( \frac{ \ell}{2}\right)^{1/3}(\xi_2 \log(2 (1+v))+s v-su)} \right|\left|\frac{1+2v}{(u-v)(1+u+v)} \right|.
\end{aligned}
\end{equation}

Introduce $v=-\frac{1}{2}+\eps+V$, $u=-\frac{1}{2}+U$, $\gamma_0^{\delta}=C_0^{\delta}-1/2+\eps$ and $\gamma_1^{\delta}=C_1^{\delta}-1/2$. A Taylor series expansion gives
\begin{equation}
\begin{aligned}
g_0(u)&= g_0(-\tfrac12)+ g_0'(-\tfrac12) U+\tfrac12 g_0''(-\tfrac12) U^2+O(U^3) \\
 &=g_0(-\tfrac12)+\eps^2 U-\eps(8+\eps) U^2+O(U^3)
\end{aligned}
\end{equation}
and
 \begin{equation}
\begin{aligned}
g_0(-\tfrac12+\eps)&= g_0(-\tfrac12+\eps) +g_0'(-\tfrac12+\eps)V +\tfrac12 g_0''(-\tfrac12+\eps) V^2+O(V^3) \\
&=g_0(-\tfrac12+\eps)+\frac{\eps^2}{1+2 \eps} V + \frac{2 \varepsilon  (\varepsilon  (2 \varepsilon +15)+8)}{(1-2 \varepsilon ) (2 \varepsilon +1)^2} V^2+O(V^3).
\end{aligned}
\end{equation}
Set
\begin{equation}
P= \exp\bigg[ \frac{\ell}{4} \Re\left(g_0(-\tfrac12+\eps)-g_0(-\tfrac12)\right) -2\left( \frac{ \ell}{2}\right)^{1/3}(  \xi_2 \log (1+2\eps)+ \eps s) \bigg]
\end{equation}
which is computed later.
Using the above  change of variables and expansions for the right side of~\eqref{eq:firstbound} gives
\begin{equation}
\begin{aligned} \label{eq:firstboundmod1}
 &\frac{ 2^2\left(\frac{\ell}{2} \right)^{1/3}P}{(2\pi)^2}
 \int_{\gamma_1^{\delta}}  \mathrm{d} U \int_{\gamma_0^{\delta}} \mathrm{d}V
\left| \frac{2\epsilon+2V}{(U+\eps+V)(\eps+V-U)}\right|\bigg| e^{O(\ell V^3,\ell U^3, \ell^{1/3}(\xi_2+s) V^2)}  \\
&\times \exp\left[ \frac{\ell}{4}\left(
\frac{\eps^2}{1+2 \eps} V + \frac{2 \varepsilon  (\varepsilon  (2 \varepsilon +15)+8)}{(1-2 \varepsilon ) (2 \varepsilon +1)^2} V^2
- \eps^2 U+\eps(8+\eps) U^2\right) \right] \bigg|.
\end{aligned}
\end{equation}
To the above equation, we make a further change of variables
\begin{equation}
U=\frac{w}{\ell^{1/2}}\frac{\sqrt{2}}{\sqrt{\eps(8+\eps)} } \hspace{5mm} \mbox{and}
\hspace{5mm}
V=\frac{z}{\ell^{1/2}} \frac{\sqrt{1-2 \varepsilon} (2 \varepsilon +1)}{\sqrt{\varepsilon  (\varepsilon  (2 \varepsilon +15)+8)}}
\end{equation}
and let $\tilde{\gamma}_1^{\delta}$ and $\tilde{\gamma}_0^{\delta}$ be the images of $\gamma_1^\delta$ and $\gamma_0^\delta$ under these change of variables.   These change of variables applied to \eqref{eq:firstboundmod1} imply that \eqref{eq:firstbound} is bounded above by
\begin{equation} \label{eq:firstboundmod2}
\begin{aligned}
&\frac{C P \ell^{1/3} }{\eps \ell} \int_{\tilde{\gamma}_1^{\delta}} \mathrm{d}w \int_{\tilde{\gamma}_0^{\delta}} \mathrm{d} z \left| \frac{ 2\eps+O(z\ell^{-1/2})}{\eps^2+O(\eps z \ell^{-1/2},\eps z \ell^{-1/2})} \right|
\left|e^{O(z^3 \ell^{-1/2},w^3 \ell^{-1/2}, \ell^{-2/3}(\xi_2+s)z^2)}\right|\\
&\times \left|e^{\frac{1}{2}(w^2+z^2)}\right|
\bigg|\exp \bigg[ \eps^2\frac{z}{\ell^{1/2}}\frac{\sqrt{ 1-2 \varepsilon }}{\sqrt{\varepsilon  (\varepsilon  (2 \varepsilon +15)+8)}}
-\eps^2\frac{w\sqrt{2}}{\ell^{1/2}\sqrt{\eps(8+\eps)} }\bigg] \bigg|.
\end{aligned}
\end{equation}
A good approximation of the contour $\tilde{\gamma}_1^{\delta}$ is an interval on the imaginary axis. This means that the exponential term involving $z$ is highly oscillatory but has an absolute value equal to 1.
 Choose $\delta$ small enough so that $O(\ell^{-1/2} z^3)\ll z^2$, $O(\ell^{-1/2} z)\ll 1$ (and also $O(\ell^{-1/2}z)  \ll O(\eps)$). Furthermore, choose $\delta$ small enough suc that $M\delta^2\ll 1$.  This means that the exponential terms in the above integral with respect to $z$ are bounded above by $|\exp(\chi_0\frac{z^2}{2})|$ with  $\chi_0>0$. Here, $\chi_0$ can be made close to 1, by choosing $\delta$ small enough. A similar statement can be made for the integral with respect to $w$.  These observations to~\eqref{eq:firstboundmod2} imply that ~\eqref{eq:firstbound} is bounded by
\begin{equation}
\frac{C P \ell^{1/3} }{\eps^2 \ell} \int_{\tilde{\gamma}_1^{\delta}} \mathrm{d}z \int_{\tilde{\gamma}_0^{\delta}} \mathrm{d} w  \Big|e^{\frac{1}{2}(w^2+z^2)}\Big|.
\end{equation}
The contours $\tilde{\gamma}_0^{\delta}$ and $\tilde{\gamma}_0^{\delta}$ can be extended to the whole imaginary axis with no significant error, resulting in a computable Gaussian integral which is finite. Using $\eps= k M(\ell/2)^{-1/3}$ we obtain
\begin{equation}\label{eq:lastbound}
|\eqref{eq:firstbound}|\leq \frac{CP}{(k M)^2}.
\end{equation}

It remains to compute $P$.  We have that
\begin{equation}
\Re (g_0(-\tfrac12+\eps)-g_0(-\tfrac12))=-4\eps +(1+2\eps+\tfrac12\eps^2) \log(1+2\eps)-(1-2\eps) \log (1-2\eps).
\end{equation}
Provided that $\eps_0$ is small enough, we apply a Taylor series expansion to find that
\begin{equation}
\Re (g_0(-\tfrac12+\eps)-g_0(-\tfrac12))=-\frac{5}{3} \eps^3+O(\eps^4),
\end{equation}
which means
\begin{equation}
\frac{\ell}{4}\Re (g_0(-\tfrac12+\eps)-g_0(-\tfrac12))=-c_1 (k M)^3+O(\eps_0\ell^{-1/3}).
\end{equation}
We also have
\begin{equation}
2 \left(\frac{\ell}{2} \right)^{1/3} (\xi_2 \log (1+2\eps)+\eps s) = c_2 k M (\xi_2(1+O(\eps_0  \ell^{-1/3}))+s),
\end{equation}
which means that $P \leq e^{-c_1 (k M)^3-c_2 (k M)(\xi_2+s)}$ for an appropriate choice of positive constants $c_1$ and $c_2$. Substituting this bound for $P$ into~\eqref{eq:lastbound} gives the result.
\end{proof}

Below we prove the missing ingredients in the proof of Theorem~\ref{ThmVarProblem}, in particular the functional slow-decorrelation result (Theorem~\ref{thmSlowDecorrelation}). This result is the analogue of Theorem~2.15 of~\cite{CLW16} for the case of geometric random variables. In the proof one needs tightness (the analogue of Lemma~5.3 of~\cite{Jo03b} for the geometric case). Fortunately, for the case of exponential random variables, this result was obtained with soft arguments in~\cite{CP15b}.
\begin{prop}[Compare with Theorem~4 of~\cite{CP15b}]\label{PropTightnessCator}
Consider the LPP from the horizontal line $\{(m,n)\in\Z^2\,|\,n=0\}$, and rescale it as
\begin{equation}\label{eq3.19b}
L^{\rm res,h}_\ell(u):=\frac{L_{(-2 u (\ell/2)^{2/3},0)\to(\ell/4,\ell/4)}-(\ell +4 u (\ell/2)^{2/3}-2^{2/3}u^2\ell^{1/3})}{2^{2/3}\ell^{1/3}},
\end{equation}
for $u\in\R$ with $2 u (\ell/2)^{2/3}\in\Z$ and linearly interpolate for the other values of $u$. Then, for any given $M>0$,
the collection $\{L^{\rm res,h}_\ell\}$ is tight in the space of continuous functions of $[-M,M]$, ${\cal C}([-M,M])$. Furthermore, $L^{\rm res,h}_\ell$ as a stochastic process in $u\in[M,M]$ converges weakly to the Airy$_2$ process.
\end{prop}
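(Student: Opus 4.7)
The proposition combines tightness of $\{L^{\rm res,h}_\ell\}_{\ell \geq 1}$ in $\mathcal{C}([-M,M])$ with the identification of the weak limit as the Airy$_2$ process. My plan is to establish finite-dimensional convergence and tightness separately, then conclude weak convergence on $\mathcal{C}([-M,M])$ by the standard criterion.

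\emph{Finite-dimensional convergence.} For any $u_1,\dots,u_k\in[-M,M]$, the vector $(L^{\rm res,h}_\ell(u_j))_{j=1}^k$ is a multi-point last-passage time with the endpoint fixed at $(\ell/4,\ell/4)$ and starting points on the horizontal axis. In TASEP language this is the step initial condition observed at several spatial positions, whose scaling limit is the Airy$_2$ process as established by Borodin--P\'ech\'e \cite{BP07} (the one-point case goes back to Johansson \cite{Jo03}). A direct asymptotic expansion of $(\sqrt{\ell/4+2u(\ell/2)^{2/3}}+\sqrt{\ell/4})^2$ shows that the centering $\ell+4u(\ell/2)^{2/3}-2^{2/3}u^2\ell^{1/3}$ in (\ref{eq3.19b}) matches the leading asymptotics of the mean of the LPP time, so the parabolic correction $-u^2$ is absorbed in the centering and the finite-dimensional limit is $(\mathcal{A}_2(u_j))_{j=1}^k$.

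\emph{Tightness.} I would follow the coupling argument of Cator--Pimentel \cite{CP15b}. Couple the point-to-point LPP to a stationary LPP with rate-$\tfrac12$ exponential boundary weights on the two axes, and let $Z_\ell(u)$ denote the exit point from the boundary of the geodesic in the stationary model. A deterministic sandwich gives a bound of the form
\[
|L^{\rm res,h}_\ell(u)-L^{\rm res,h}_\ell(v)|\leq W_\ell(u,v)+C\ell^{-1/3}\bigl(|Z_\ell(u)|+|Z_\ell(v)|\bigr),
\]
where $W_\ell$ is the increment of the boundary Brownian contribution. Combining the moderate-deviation estimate $\Pb(|Z_\ell|>r\ell^{2/3})\leq Ce^{-cr^3}$ (valid for $r$ in a bounded range, as in BCS-type arguments) with $\E[W_\ell(u,v)^2]\leq C|u-v|$ then yields
\[
\E\bigl[(L^{\rm res,h}_\ell(u)-L^{\rm res,h}_\ell(v))^2\bigr]\leq C|u-v|,\qquad |u-v|\leq 2M,
\]
uniformly in $\ell$, with Gaussian-type tails. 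The Kolmogorov--Centsov criterion then gives tightness in $\mathcal{C}([-M,M])$; the linear interpolation between lattice points at spacing $O(\ell^{-2/3})$ in $u$ does not affect tightness, since the interpolated oscillation is dominated by the increment bound.

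\emph{Assembly and main obstacle.} Finite-dimensional convergence plus tightness gives weak convergence on $\mathcal{C}([-M,M])$ to the Airy$_2$ process. The main obstacle is the increment variance bound. Cator--Pimentel's argument is formulated for a specific line geometry; one must verify that the stationary comparison and the exit-point moderate-deviation bounds transfer to the present setting where the starting line is the horizontal axis $\{n=0\}$ and the endpoint is fixed on the diagonal. This transfer is natural given the reflection symmetry of $\exp(1)$ LPP across the diagonal $\{m=n\}$, but requires careful bookkeeping of scaling constants to confirm that the $\ell^{2/3}$ transverse and $\ell^{1/3}$ fluctuation scales match those entering the Cator--Pimentel input estimates.
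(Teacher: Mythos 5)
Your overall strategy matches the paper's: finite-dimensional convergence from Borodin--P\'ech\'e \cite{BP07}, tightness from Cator--Pimentel \cite{CP15b}, and assembly via Billingsley's criterion. The paper treats Theorem~4 of \cite{CP15b} essentially as a black box, whereas you attempt to re-derive it, and this introduces a technical imprecision: a second-moment increment bound $\E\bigl[(L^{\rm res,h}_\ell(u)-L^{\rm res,h}_\ell(v))^2\bigr]\leq C|u-v|$ is by itself not sufficient for Kolmogorov--Centsov, which requires $\E[|X(u)-X(v)|^\alpha]\leq C|u-v|^{1+\beta}$ with $\beta>0$; your ``Gaussian-type tails'' remark gestures at the needed higher-moment bound but does not establish it. You also omit the one subtlety the paper does flag, namely that \cite{CP15b} state tightness in c\`adl\`ag space and give the detailed proof for the Hammersley/Poisson model, so one must check by inspection that the bounds are strong enough to yield tightness in $\mathcal{C}([-M,M])$ for exponential LPP. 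Your concern about the geometry of the starting line is handled automatically by the reversal symmetry of LPP and is not an obstacle the paper needs to address.
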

\begin{proof}
Tightness of $L^{\rm res,h}_\ell$ was already shown in Theorem~4 of~\cite{CP15b} with a soft argument that uses a comparison with the stationary case. The basic idea is to compare, for fixed $n$, the increments of $m\mapsto L_{(0,0)\to (m,n)}$ with the same increment for stationary initial conditions with densities $\rho^\pm=1/2\pm\kappa n^{-1/3}$. On some events with high probability (going to $1$ as $\kappa\to\infty$) one has an explicit upper (resp.\ lower) bounds of $L_{(0,0)\to (m+x,n)}-L_{(0,0)\to (m,n)}$ in terms of increments of the stationary case with density $\rho^+$ (resp.\ $\rho^-$). The increments of the latter are easy to control since they are sums of independent $\exp(\rho^\pm)$ random variables. In~\cite{CP15b} paper, the statement is written in the space of c\`adl\`ag functions. The reason is that the detailed proof is given only for a related model (the Hammersley process / LPP in Poisson points) in which c\`adl\`ag is the natural way of describing. However by inspecting the bounds used in the proof, one sees that these are strong enough to get tightness in the space of continuous functions as well.

The convergence of finite dimensional distributions for this exponential waiting times and along the horizontal line is a special case of~\cite{BP07} (or can be obtained from the finite-dimensional distributions along other lines using slow-decorrelation~\cite{CFP10b}; see~\cite{BFP09} for an application of this technique). These two ingredients imply weak convergence~\cite{Bil68}.
\end{proof}
\begin{rem}
For geometric random variables this statement was shown in Theorem~1.2 in~\cite{Jo03b}, which is obtained by controlling the modulus of continuity (Lemma~5.3 in~\cite{Jo03b}) together with the convergence of the finite-dimensional distributions.
\end{rem}

We are ready to prove the functional slow-decorrelation theorem. This is the analogue of Theorem~2.15 of~\cite{CLW16}, which holds for geometric random variables, (slightly) specialized to our setting.
\begin{thm}\label{thmSlowDecorrelation}
Let $\cal L$ be any down-right path traversing ${\cal D}_{M,\ell}$ between the two sides of ${\cal D}_{M,\ell}$ with $|m-n|=2 M (\ell/2)^{2/3}$ (as in Figure~\ref{FigLPP}). For $I_0(u)=u (\ell/2)^{2/3} (1,-1)$, let $I(u)$ be the point in $\cal L$ whose orthogonal projection along $(1,1)$ is $I_0(u)$. Denote by $\mu((\ell_1,\ell_2))=(\sqrt{\ell_1}+\sqrt{\ell_2})^2$ to be the limit shape approximation of $L_{(0,0)\to (\ell_1,\ell_2)}$. Then consider the rescaled LPP processes from the anti-diagonal, $L^{\rm res,ad}$, and from the line $\cal L$, $L^{\rm res,{\cal L}}$ (with linear interpolation as in Proposition~\ref{PropTightnessCator}).
Define
\begin{equation}\label{eq3.20}
\begin{aligned}
L^{\rm res,ad}_\ell(u)&:=\frac{L_{I_0(u)\to(\ell/4,\ell/4)}-\mu((\ell/4,\ell/4)-I_0(u))}{2^{2/3}\ell^{1/3}},\\
L^{\rm res,{\cal L}}_\ell(u)&:=\frac{L_{I(u)\to(\ell/4,\ell/4)}-\mu((\ell/4,\ell/4)-I(u))}{2^{2/3}\ell^{1/3}}.
\end{aligned}
\end{equation}
Then, $L^{\rm res,ad}_\ell-L^{\rm res,{\cal L}}_\ell$ converges in probability to $0$ in ${\cal C}([-M,M])$ as $\ell\to\infty$. Explicitly, there are $\e,\delta>0$ and a $\ell_0$, such that for all $\ell\geq \ell_0$,
\begin{equation}\label{eq3.21}
\Pb\Big(\max_{|u|\leq M} |L^{\rm res,ad}_\ell(u)-L^{\rm res,{\cal L}}_\ell(u)|\geq \delta\Big)<\e.
\end{equation}
\end{thm}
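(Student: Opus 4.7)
The strategy follows the classical ``one-point convergence + uniform modulus of continuity $\Rightarrow$ uniform convergence'' pattern, built from the two ingredients available at this stage: the one-point slow-decorrelation of~\cite{CFP10b} and the horizontal-line tightness of Proposition~\ref{PropTightnessCator}. Crucially, we are not allowed to use tightness of $L^{\rm res,ad}_\ell$ or $L^{\rm res,{\cal L}}_\ell$, because the present theorem is precisely what is used to establish such tightness in Corollary~\ref{corTightness}.

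Fix $\delta,\varepsilon>0$ and discretize $[-M,M]$ by a mesh $-M=u_0<u_1<\cdots<u_N=M$ of spacing $\Delta u=2M/N$, with $N=N(\delta,\varepsilon)$ to be chosen. The vector $I(u_i)-I_0(u_i)$ is parallel to the characteristic direction $(1,1)$ and has length $O((\ell/2)^{2/3})=O(\ell^\nu)$ with $\nu=2/3<1$, so the one-point slow-decorrelation result~\cite{CFP10b} applies and yields
\begin{equation}
L^{\rm res,ad}_\ell(u_i)-L^{\rm res,{\cal L}}_\ell(u_i)\xrightarrow{\Pb}0 \quad\text{as }\ell\to\infty
\end{equation}
for each fixed $i$. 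A union bound over the $N$ mesh points gives
\begin{equation}
\Pb\Big(\max_{0\le i\le N}|L^{\rm res,ad}_\ell(u_i)-L^{\rm res,{\cal L}}_\ell(u_i)|\ge\delta/3\Big)<\varepsilon/2
\end{equation}
for all $\ell$ large enough.

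To absorb the oscillations between mesh points, the plan is to compare the increments of $L^{\rm res,ad}_\ell$ and $L^{\rm res,{\cal L}}_\ell$ to those of $L^{\rm res,h}_\ell$ via LPP monotonicity. For $u,u'\in[u_i,u_{i+1}]$, the pairs $I_0(u),I_0(u')$ and $I(u),I(u')$ differ by $O(\Delta u\cdot (\ell/2)^{2/3})$. Using superadditivity of LPP, one sandwiches $L_{I_0(u)\to T}-L_{I_0(u')\to T}$ between expressions involving (i) increments of the horizontal-line LPP near the $m$-axis projection of $I_0(u)$, and (ii) short LPP pieces of spatial extent $O((\ell/2)^{2/3})$ joining the anti-diagonal to the horizontal line. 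Piece (ii) is an LPP on a box of side $O(\ell^{2/3})$, hence has fluctuations of order $\ell^{2/9}=o(\ell^{1/3})$ by KPZ scaling, uniformly in $u$; piece (i) has a controllable modulus of continuity by Proposition~\ref{PropTightnessCator}. The containment ${\cal L}\subset{\cal D}_{M,\ell}$ (which bounds the distance of $I(u)$ from the anti-diagonal, and hence from the horizontal line, by $O((\ell/2)^{2/3})$) makes the same comparison work for the process $L^{\rm res,{\cal L}}_\ell$. Choosing $N$ sufficiently large, the total oscillation on each sub-interval is at most $\delta/3$ with probability $\ge 1-\varepsilon/2$, and~(\ref{eq3.21}) follows by the triangle inequality.

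The main obstacle is this second step: establishing a modulus-of-continuity bound for $L^{\rm res,ad}_\ell$ and $L^{\rm res,{\cal L}}_\ell$ \emph{without} already having tightness on those lines. The monotonicity-based comparison to horizontal-line LPP is where the bookkeeping is delicate; the geometric containment ${\cal L}\subset{\cal D}_{M,\ell}$ is essential to keep the ``bridging'' pieces to the horizontal line at a small enough spatial scale that they can be absorbed into the modulus of continuity budget.
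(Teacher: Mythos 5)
Your plan has the right ingredients --- one-point slow-decorrelation, horizontal-line tightness from~\cite{CP15b}, superadditivity, and the observation that the bridging LPP pieces over a $O(\ell^{2/3})$-box fluctuate at scale $\ell^{2/9}=o(\ell^{1/3})$ --- and it would arrive at the same conclusion. But the paper organizes these ingredients rather differently, in a way that sidesteps the step you yourself flag as ``delicate''. The paper first reduces (via the triangle inequality) to comparing $L^{\rm res,h}_\ell$ with $L^{\rm res,{\cal L}}_\ell$, and then introduces two horizontal lines $I_\pm(u)$ that sandwich ${\cal D}_{M,\ell}$. The supremum of the \emph{signed} difference $L^{\rm res,h}_\ell-L^{\rm res,{\cal L}}_\ell$ is decomposed as in (\ref{eq3.31})--(\ref{eq3.32}) into a horizontal-to-horizontal piece $|L^{\rm res,h}_\ell-L^{\rm res,\pm}_\ell|$ and a one-sided piece $L^{\rm res,+}_\ell-L^{\rm res,{\cal L}}_\ell$ (resp.\ $L^{\rm res,{\cal L}}_\ell-L^{\rm res,-}_\ell$). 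The horizontal-to-horizontal piece is handled exactly as you describe (tightness $\Rightarrow$ modulus of continuity; union bound on a mesh; one-point slow-decorrelation at mesh points, applied only between \emph{fixed} horizontal lines). The one-sided piece with ${\cal L}$ is where the paper diverges from your plan: instead of trying to sandwich increments of $L^{\rm res,{\cal L}}_\ell$, the paper applies superadditivity once at the level of \emph{values}, $L_{I(u)\to(\ell/4,\ell/4)}\ge L_{I(u)\to I_+(u)}+L_{I_+(u)\to(\ell/4,\ell/4)}$, obtaining a one-sided pointwise bound $L^{\rm res,+}_\ell(u)-L^{\rm res,{\cal L}}_\ell(u)\le -(L_{I(u)\to I_+(u)}-\mu)/(2^{2/3}\ell^{1/3})+\Or(\ell^{-1/3})$, and never needs a modulus-of-continuity estimate for $L^{\rm res,{\cal L}}_\ell$ at all. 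This is the cleaner route, because superadditivity naturally gives only a one-sided inequality; trying to turn it into a two-sided ``sandwich'' of increments, as your plan envisages, forces you to juggle two different horizontal lines simultaneously and essentially to re-derive tightness of $L^{\rm res,{\cal L}}_\ell$, which is what you were trying to avoid.

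There is also a concrete gap in your plan worth naming: you assert that the bridge pieces (your piece (ii)) have fluctuations $o(\ell^{1/3})$ ``uniformly in $u$'', but that uniformity is not free. The supremum over $u\in[-M,M]$ is, after linear interpolation, a maximum over $\Or(\ell^{2/3})$ lattice positions, so one needs a tail bound that survives this union bound. The paper uses the lower-tail moderate deviations estimate $\Pb(L^{\rm res}_\ell\le -s)\le Ce^{-c|s|^{3/2}}$ from Proposition~\ref{PropBounds}(c), evaluated at $s=\delta\ell^{1/9}/4$, giving a bound $Ce^{-c(\delta)\ell^{1/6}}$ whose product with $\ell^{2/3}$ vanishes. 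Without invoking such a tail estimate, the ``uniformly in $u$'' claim is unjustified and the argument does not close.
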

In the proof, we use known results for the point-to-point LPP with exponential random variables, which we recall here.
\begin{prop}\label{PropBounds}
For $\eta\in(0,\infty)$ define $\mu=(\sqrt{\eta \ell}+\sqrt{\ell})^2$, $\sigma=\eta^{-1/6}(1+\sqrt{\eta})^{4/3}$, and the rescaled random variable
\begin{equation}
L^{\rm res}_\ell:=\frac{L_{(0,0)\to(\eta\ell,\ell)}-\mu}{\sigma\ell^{1/3}}.
\end{equation}
(a) Limit law
\begin{equation}
\lim_{\ell\to\infty} \Pb(L^{\rm res}_\ell\leq s) = F_{\rm GUE}(s),
\end{equation}
with $F_{\rm GUE}$ the GUE Tracy-Widom distribution function.\\
(b) Bound on upper tail: there exist constants $s_0,\ell_0,C,c$ such that
\begin{equation}
\Pb(L^{\rm res}_\ell\geq s)\leq C e^{-c s}
\end{equation}
for all $\ell\geq \ell_0$ and $s\geq s_0$.\\
(c) Bound on lower tail: there exists constants $s_0,\ell_0,C,c$ such that
\begin{equation}
\Pb(L^{\rm res}_\ell\leq s)\leq C e^{-c |s|^{3/2}}
\end{equation}
for all $\ell\geq \ell_0$ and $s\leq -s_0$.
\end{prop}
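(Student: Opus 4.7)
The plan is to derive all three parts from the Fredholm determinant representation of the point-to-point law, along the lines of the steepest-descent analysis already carried out in Lemma~\ref{lem:moderatedeviations}. Via the TASEP--LPP correspondence of Section~\ref{sec:TASEPandLPP} specialized to step initial data ($x_j(0) = -j$ for $j \geq 1$), the distribution of $L_{(0,0)\to(\eta\ell,\ell)}$ is given by the kernel formula~\eqref{eq:Fredholm}--\eqref{eq:kernel} with this initial condition. After the change of variables dictated by the exponents in $\mu$ and $\sigma$ (so that the critical point is sent to the origin and the rescaled variable has spacing of order $\ell^{-1/3}$), one is reduced to controlling a rescaled kernel $K_\ell^{\rm resc}$ analogous to the one treated in Section~\ref{sectVarFormula}, and all three statements become uniform-in-$\ell$ estimates on this kernel.

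For part (a), I would perform a standard steepest-descent analysis of the double contour integral around the (double) critical point $v_c = v_c(\eta)$ of the exponent. After the cubic rescaling this gives $K_\ell^{\rm resc} \to K_{\rm Ai}$ pointwise together with an integrable tail in $\xi_1 + \xi_2$. Dominated convergence applied to the Fredholm expansion then yields the GUE Tracy--Widom limit. This is Johansson's theorem~\cite{Jo00b}.

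For part (b), I would bound $\Pb(L^{\rm res}_\ell \geq s) = 1 - \det(I - K_\ell^{\rm resc}) \leq \Tr K_\ell^{\rm resc}|_{(s,\infty)}$ and derive the pointwise estimate $|K_\ell^{\rm resc}(\xi,\xi)| \leq C e^{-c(s+\xi)}$ by shifting the $v$-contour a distance of order $\ell^{-1/3} s^{1/2}$ past the critical point; the real part of the exponent at the shifted point produces the exponential prefactor in $s$, and the integrand in the remaining variable supplies the decay in $\xi$. Integrating yields the claimed bound, with constants uniform in $\ell$ by the same bookkeeping as in Lemma~\ref{lem:moderatedeviations}.

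Part (c) is the main obstacle, because of the stretched $3/2$ exponent. I would use that $K_\ell^{\rm resc}$ is a positive-definite correlation kernel with spectrum in $[0,1]$ to write $\det(I - K_\ell^{\rm resc}) \leq \exp(-\Tr K_\ell^{\rm resc}|_{(s,\infty)})$, which reduces (c) to the lower bound $\Tr K_\ell^{\rm resc}|_{(s,\infty)} \geq c|s|^{3/2}$. The $3/2$ exponent is transparent from the steepest-descent picture: on the diagonal $\xi \in (s, 0)$ the kernel satisfies $K_\ell^{\rm resc}(\xi,\xi) \gtrsim \sqrt{|s| + \xi}$ (reflecting the Wigner-type bulk density to the left of the edge), and integrating over an interval of length $|s|$ gives $|s|^{3/2}$. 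Uniformity in $\ell$ follows once more from uniform steepest-descent estimates on the kernel. In practice, rather than redo these estimates from scratch, I would simply invoke the well-established uniform moderate and large-deviation bounds for the LUE largest eigenvalue, which via the Laguerre representation coincide with the present $L^{\rm res}_\ell$.
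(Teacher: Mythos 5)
Your proposal takes essentially the same route as the paper: the paper disposes of Proposition~\ref{PropBounds} by citation --- (a) to Johansson's theorem in~\cite{Jo00b}, (b) to the exponential kernel decay reported in~\cite{FN13} and~\cite{BFP12}, and (c) to Proposition~3 of~\cite{BFP12} (or Proposition~4.3 of~\cite{FN13}) --- and you, after sketching the underlying steepest-descent mechanics, likewise conclude (c) by invoking the established LUE moderate/large-deviation bounds, so the end position is the same. What you add is a readable account of what those citations actually entail. Your reductions are sound: for (b), the inequality $1-\det(I-K) \le \Tr K$ (valid because the restriction of a determinantal correlation kernel to an interval has spectrum in $[0,1]$) together with a pointwise exponential kernel bound obtained by nudging the contour off the double critical point is exactly how~\cite{FN13,BFP12} proceed; for (c), the bound $\det(I-K) \le \exp(-\Tr K)$ correctly converts the lower tail into a \emph{lower} bound on $\Tr K_\ell^{\rm resc}|_{(s,\infty)}$, and the $|s|^{3/2}$ exponent indeed comes from integrating the $\sqrt{|\xi|}$ edge density over an interval of length $|s|$. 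The one place to be honest about the difficulty is precisely (c): establishing a uniform-in-$\ell$ \emph{lower} bound $K_\ell^{\rm resc}(\xi,\xi) \gtrsim \sqrt{|s|+\xi}$ on the diagonal is genuinely harder than the upper-tail estimate, since one must control cancellation in the oscillatory double integral rather than merely dominate it --- this is why you (sensibly) and the paper both end up deferring (c) to the Laguerre-ensemble literature rather than redoing the steepest descent.
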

(a) was proven in Theorem~1.6 of\cite{Jo00b}. Using the relation with the Laguerre ensemble of random matrices (Proposition~6.1 of~\cite{BBP06}), or to TASEP described above, the distribution is given by a Fredholm determinant. An exponential decay of its kernel leads directly to (b). See e.g.\ Proposition~4.2 of~\cite{FN13} or Lemma~1 of~\cite{BFP12} for an explicit statement. (c) was proven in~\cite{BFP12} (Propositon~3 together with (56)). In the present language it is reported in Proposition~4.3 of~\cite{FN13} as well.
\begin{proof}[Proof of Theorem~\ref{thmSlowDecorrelation}]
The proof is similar to the one of Theorem~2.15 of~\cite{CLW16}, except that this time tightness is known on a horizontal slice of $\Z^2$ (see Proposition~\ref{PropTightnessCator}). To prove (\ref{eq3.21}) it is enough to prove it for $L^{\rm res,h}_\ell$ instead of $L^{\rm res,ad}_\ell$. Indeed, the anti-diagonal satisfies the requirements for ${\cal L}$ and one uses the triangle inequality
\begin{equation}
|L^{\rm res,ad}_\ell(u)-L^{\rm res,{\cal L}}_\ell(u)| \leq |L^{\rm res,h}_\ell(u)-L^{\rm res,ad}_\ell(u)| + |L^{\rm res,h}_\ell(u)-L^{\rm res,{\cal L}}_\ell(u)|.
\end{equation}
Thus we need to show
\begin{equation}\label{eq3.21b}
\Pb\Big(\max_{|u|\leq M} |L^{\rm res,h}_\ell(u)-L^{\rm res,{\cal L}}_\ell(u)|\geq \delta\Big)<\e.
\end{equation}

Let us consider two horizontal lines that `enclose' the set ${\cal D}_{M,\ell}$, see Figure~\ref{FigLPP}.
We choose the lines parameterized by
\begin{equation}
I_\pm(u)=-2u(\ell/2)^{2/3} (1,0)\pm 2M (\ell/2)^{2/3} (1,1),\quad |u|\leq M,
\end{equation}
and define $L^{\rm res,\pm}_\ell(u)$ as in (\ref{eq3.20}) with $I_0(u)$ replaced by $I_\pm(u)$. To prove (\ref{eq3.21b}) we use the inequalities
\begin{equation}\label{eq3.31}
\begin{aligned}
\Pb\Big(\max_{|u|\leq M} L^{\rm res,h}_\ell(u)-L^{\rm res,{\cal L}}_\ell(u)\geq \delta\Big) &\leq
\Pb\Big(\max_{|u|\leq M} L^{\rm res,h}_\ell(u)-L^{\rm res,+}_\ell(u)\geq \delta/2\Big)\\
&+\Pb\Big(\max_{|u|\leq M} L^{\rm res,+}_\ell(u)-L^{\rm res,{\cal L}}_\ell(u)\geq \delta/2\Big)
\end{aligned}
\end{equation}
and similarly
\begin{equation}\label{eq3.32}
\begin{aligned}
\Pb\Big(\max_{|u|\leq M} L^{\rm res,{\cal L}}_\ell(u)-L^{\rm res,h}_\ell(u)\geq \delta\Big) &\leq
\Pb\Big(\max_{|u|\leq M} L^{\rm res,{\cal L}}_\ell(u)-L^{\rm res,-}_\ell(u)\geq \delta/2\Big)\\
&+\Pb\Big(\max_{|u|\leq M} L^{\rm res,-}_\ell(u)-L^{\rm res,h}_\ell(u)\geq \delta/2\Big).
\end{aligned}
\end{equation}

Below we consider only (\ref{eq3.31}) as (\ref{eq3.32}) is threaded in a similar way. First we prove the following: for any $\e,\delta>0$, there exists a $\ell_0$ (depending on $M$ only) such that for all $\ell\geq\ell_0$,
\begin{equation}\label{eq3.23}
\Pb\Big(\max_{|u|\leq M} |L^{\rm res,+}_\ell(u)-L^{\rm res,h}_\ell(u)|\geq \delta/2\Big)<\e/2.
\end{equation}
First of all, notice that tightness of $L^{\rm res,h}_\ell$ implies also tightness of $L^{\rm res,\pm}_\ell$ as they are essentially the same process:
\begin{equation}
L^{\rm res,+}_\ell(u)=(1+4M(\ell/2)^{-1/3})^{-1/3} L^{\rm res,h}_{\ell-8M(\ell/2)^{2/3}}(u+\Or(\ell^{-1/3})).
\end{equation}
Thus, for $M,\e,\delta$ given above, there exists constants $\delta_1,\ell_1$ such that for all $\ell\geq \ell_1$,
\begin{equation}\label{eq3.25}
\Pb\Big(\max_{|u_1|,|u_2|\leq M, |u_1-u_2|\leq \delta_1} |L^{\rm res,*}_\ell(u_1)-L^{\rm res,*}_\ell(u_2)|\geq \delta/6\Big)<\e/6,\quad *\in\{+,{\rm h}\}.
\end{equation}
Let us define $u_j= j \delta_1$ with $j\in\Z$ satisfying $u_j\in[-M,M]$. Since the number of $j$'s is finite, from the usual slow-decorrelation result (Theorem~2.1 of~\cite{CFP10b}), there exists some constant $\ell_2$ (depending on $\e,\delta,\delta_1$) such that for all $\ell\geq \ell_2$,
\begin{equation}\label{eq3.26}
\Pb\Big(\max_{j: |u_j|\leq M} |L^{\rm res,+}_\ell(u_j)-L^{\rm res,h}_\ell(u_j)|\geq \delta/6\Big)<\e/6.
\end{equation}
For a given $u\in[-M,M]$, choose a $j$ such that $|u_j-u|\leq \delta_1$. Then,
\begin{equation}
\begin{aligned}
|L^{\rm res,+}_\ell(u)-L^{\rm res,h}_\ell(u)|&\leq |L^{\rm res,+}_\ell(u)-L^{\rm res,+}_\ell(u_j)|+|L^{\rm res,+}_\ell(u_j)-L^{\rm res,h}_\ell(u_j)|\\
&+|L^{\rm res,h}_\ell(u_j)-L^{\rm res,h}_\ell(u)|,
\end{aligned}
\end{equation}
which, together with (\ref{eq3.25}) and (\ref{eq3.26}), implies (\ref{eq3.23}) as claimed.

Finally we prove: for any $\e,\delta>0$, there exists a $\ell_0$ (depending on $M$ only) such that for all $\ell\geq\ell_0$,
\begin{equation}\label{eq3.23b}
\Pb\Big(\max_{|u|\leq M} L^{\rm res,+}_\ell(u)-L^{\rm res,{\cal L}}_\ell(u)\geq \delta/2\Big)<\e/2.
\end{equation}
Since the rescaled LPP are defined by linear interpolation between lattice-points, the maximum in (\ref{eq3.23b}) is effectively only on $\Or(\ell^{2/3})$ number of points and thus it is enough to have a bound on $\Pb\Big(L^{\rm res,+}_\ell(u)-L^{\rm res,{\cal L}}_\ell(u)\geq \delta/2\Big)$, which multiplied by $\ell^{2/3}$ goes to $0$ as $\ell\to\infty$.

Without rescaling, since the maximizers from $I(u)$ to $(\ell/4,\ell/4)$ can pass by $I_+(u)$ (and will pass close to it since $I_+(u)$ is essentially on the characteristic line between $I(u)$ and $(\ell/4,\ell/4)$), we have
\begin{equation}\label{eq3.37}
L_{I(u)\to(\ell/4,\ell/4)}\geq L_{I(u)\to I_+(u)}+L_{I_+(u)\to(\ell/4,\ell/4)}.
\end{equation}
A simple algebraic computation shows that
\begin{equation}
\mu((\ell/4,\ell/4)-I(u)) = \mu((\ell/4,\ell/4)-I_+(u)) +\mu(I_+(u)-I(u)) +\Or(1),
\end{equation}
where the $\Or(1)$ term is independent of $\ell$ (and bounded uniformly for $|u|\leq M$).
Thus (\ref{eq3.37}) holds also in the rescaled version up to a correction $\Or(\ell^{-1/3})$:
\begin{equation}\label{eq3.39}
L^{\rm res,+}_\ell(u)-L^{\rm res,{\cal L}}_\ell(u) \leq -\frac{L_{I(u)\to I_+(u)}-\mu(I_+(u)-I(u))}{2^{2/3}\ell^{1/3}}+\Or(\ell^{-1/3}).
\end{equation}
With the choice of $I_+(u)$, $I_+(u)-I(u)=\Or(\ell^{2/3})$ and thus by Proposition~\ref{PropBounds} we have that $\frac{L_{I(u)\to I_+(u)}-\mu(I_+(u)-I(u))}{\sigma \ell^{2/9}}$ converges to a non-degenerate random variable. Since, however, we divide by $\ell^{1/3}\gg \ell^{2/9}$, the right side of (\ref{eq3.39}) will converge to $0$. Take $\ell$ large enough such that the error term $\Or(\ell^{-1/3})$ is bounded by $\delta/4$. Then
\begin{equation}
\begin{aligned}
\Pb\Big(L^{\rm res,+}_\ell(u)-L^{\rm res,{\cal L}}_\ell(u)\geq \delta/2\Big) &\leq
\Pb\Big(\frac{L_{I(u)\to I_+(u)}-\mu(I_+(u)-I(u))}{2^{2/3}\ell^{1/3}}\leq -\delta/4\Big)\\
&= \Pb\Big(\frac{L_{I(u)\to I_+(u)}-\mu(I_+(u)-I(u))}{2^{2/3}\ell^{2/9}}\leq -\delta\ell^{1/9}/4\Big)\\
&\leq C e^{-c(\delta)\ell^{1/6}}
\end{aligned}
\end{equation}
where the last inequality comes from Proposition~\ref{PropBounds}(c). Thus we have obtained the required bound to prove (\ref{eq3.23b}), which was the last piece in the proof of Theorem~\ref{thmSlowDecorrelation}.
\end{proof}

\begin{cor}\label{corTightness}
Consider the LPP from $\{(m,n)\in\Z^2\,|\,n+m=0\}$, the anti-diagonal, and rescale it as
\begin{equation}\label{eq3.19}
L^{\rm res,ad}_\ell(u):=\frac{L_{(u (\ell/2)^{2/3},-u (\ell/2)^{2/3})\to(\ell/4,\ell/4)}-(\ell-2^{2/3}u^2\ell^{1/3})}{2^{2/3}\ell^{1/3}},
\end{equation}
for $u\in\R$ with $u (\ell/2)^{2/3}\in\Z$ and linearly interpolate for the other values of $u$. Then, for any given $M>0$,
the collection $\{L^{\rm res}_\ell\}$ is tight in the space of continuous functions of $[-M,M]$. Furthermore, $L^{\rm res}_\ell$ as a stochastic process in $u\in[M,M]$ converges weakly to the Airy$_2$ process.
\end{cor}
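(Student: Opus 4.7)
The plan is to deduce Corollary~\ref{corTightness} directly from Proposition~\ref{PropTightnessCator} together with the functional slow-decorrelation result of Theorem~\ref{thmSlowDecorrelation}. The key observation is that the horizontal line $\{(m,n)\in\Z^2 : n=0\}$ (intersected with $\mathcal{D}_{M,\ell}$) is an admissible choice for the down-right path $\mathcal{L}$ in Theorem~\ref{thmSlowDecorrelation}; moreover, for this $\mathcal{L}$ the projection construction of $I(u)$ used in that theorem gives exactly $I(u)=(-2u(\ell/2)^{2/3},0)$, which matches the parameterization of $L^{\rm res,h}_\ell$ in Proposition~\ref{PropTightnessCator}.

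First I would verify that the two definitions of $L^{\rm res,ad}_\ell$, namely (\ref{eq3.19}) in the corollary and (\ref{eq3.20}) in Theorem~\ref{thmSlowDecorrelation}, agree up to a uniformly $o(1)$ deterministic correction on $|u|\leq M$. Expanding $\mu((\ell/4,\ell/4)-I_0(u))=(\sqrt{\ell/4-u(\ell/2)^{2/3}}+\sqrt{\ell/4+u(\ell/2)^{2/3}})^2$ to leading orders yields $\ell - 2^{2/3}u^2\ell^{1/3}+\Or(\ell^{-1/3})$ uniformly on $|u|\leq M$, so after division by $2^{2/3}\ell^{1/3}$ the two versions differ by a deterministic quantity that vanishes uniformly as $\ell\to\infty$.

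With this identification, Theorem~\ref{thmSlowDecorrelation} applied to the horizontal $\mathcal{L}$ gives $\max_{|u|\leq M}|L^{\rm res,h}_\ell(u)-L^{\rm res,ad}_\ell(u)|\to 0$ in probability in $\mathcal{C}([-M,M])$, while Proposition~\ref{PropTightnessCator} gives tightness of $\{L^{\rm res,h}_\ell\}$ in $\mathcal{C}([-M,M])$ together with the weak convergence $L^{\rm res,h}_\ell\Rightarrow\mathcal{A}_2$. Writing $L^{\rm res,ad}_\ell=L^{\rm res,h}_\ell-(L^{\rm res,h}_\ell-L^{\rm res,ad}_\ell)$ and invoking Slutsky's theorem on $\mathcal{C}([-M,M])$ then delivers both tightness of $\{L^{\rm res,ad}_\ell\}$ and weak convergence to the Airy$_2$ process on $[-M,M]$.

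I do not foresee any real obstacle: this corollary is genuinely a packaging of the two inputs above, which is exactly the point emphasized in the introduction, namely that functional slow-decorrelation transfers tightness to generic space-time slices not directly covered by the Cator--Pimentel argument. The only mildly technical point is the Taylor expansion reconciling the two normalizations, which is routine.
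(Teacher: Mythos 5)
Your plan is exactly the paper's: deduce the corollary from Proposition~\ref{PropTightnessCator} plus the functional slow-decorrelation Theorem~\ref{thmSlowDecorrelation}, and your normalization check that $\mu((\ell/4,\ell/4)-I_0(u)) = \ell - 2^{2/3}u^2\ell^{1/3}+\Or(\ell^{-1/3})$ is correct and worth recording. There is, however, one technical slip in the bridge. The horizontal line $\{n=0\}$ intersected with ${\cal D}_{M,\ell}$ is \emph{not} an admissible choice for $\mathcal{L}$ in Theorem~\ref{thmSlowDecorrelation}: since ${\cal D}_{M,\ell}$ is the thin rectangle $\{|m+n|\leq(\ell/2)^{2/3},\ |m-n|\leq M(\ell/2)^{2/3}\}$, the horizontal line exits through the long sides $|m+n|=(\ell/2)^{2/3}$ after only an $\Or(1)$ fraction of the anti-diagonal range $|u|\leq M$, so it does not traverse between the short sides as the theorem requires; equivalently, the parameterization $I(u)=(2u(\ell/2)^{2/3},0)$ lies outside ${\cal D}_{M,\ell}$ for $|u|>\tfrac12$. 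The correct invocation, which the paper uses, is the other way around: the \emph{anti-diagonal} is an admissible $\mathcal{L}$, and the key intermediate estimate in the theorem's proof (the analogue of \eqref{eq3.21} with $L^{\rm res,h}_\ell$ in place of $L^{\rm res,ad}_\ell$, established via the enclosing horizontal lines $I_\pm$) then yields $L^{\rm res,h}_\ell - L^{\rm res,ad}_\ell \to 0$ in probability on ${\cal C}([-M,M])$. Once that intermediate step is cited instead, the rest of your argument — Slutsky on ${\cal C}([-M,M])$ to transfer tightness and the weak limit from $L^{\rm res,h}_\ell$ to $L^{\rm res,ad}_\ell$ — goes through exactly as you wrote it.
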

\begin{proof}
This follows from Theorem~\ref{PropTightnessCator} and Theorem~\ref{thmSlowDecorrelation}, since in the latter we can take ${\cal L}$ to be the anti-diagonal as well.
\end{proof}

\section{Monte Carlo simulations}\label{sectSimulation}
Let us recall the variational formula obtained in Theorem~\ref{ThmVarProblem},
\begin{equation}\label{eq1.21}
F^{(\sigma)}(s):=\Pb\left( \max_{u\in\R} \left[\sqrt{2}\sigma B(u)+ {\cal A}_2(u)-u^2\right]\leq s\right).
\end{equation}
While this defines the limit distribution, it seems to be difficult to either deduce more explicit expressions or to run  numerical evaluations. To visualize our result we therefore return to the TASEP. The TASEP is particularly convenient as it is accessible by Monte Carlo simulations.

The TASEP exhibits well-known finite-time corrections. In particular, although the shape of the distribution is accurate even for relatively small times, one generically observes a shift, which is due to the slow $t^{-1/3}$ convergence of the first moment~\cite{FF11,Tak12,TS10,TSSS11}. The speed of convergence of the variance is $t^{-2/3}$. Therefore, in order to minimize the errors due to finite-time corrections, we extrapolate numerically the large time first and second cumulants and scale the numerical data to match the limiting first two cumulants. This procedure gives good results for the special case $\sigma=0$ and $\sigma=1$ for which the cumulants are known analytically as well.

In Figure~\ref{PlotDensities}, we show the probability densities obtained by the Monte Carlo simulations for a subset of values of the $\alpha$ given in Table~\ref{TableCumNumerics}.
\begin{figure}
\begin{center}
\psfrag{0.5}[r]{$0.5$}
\psfrag{0.4}[r]{$0.4$}
\psfrag{0.3}[r]{$0.3$}
\psfrag{0.2}[r]{$0.2$}
\psfrag{0.1}[r]{$0.1$}
\psfrag{0.0}[r]{$0.0$}
\psfrag{-}[ct]{$$}
\psfrag{m2}[ct]{$-2$}
\psfrag{0}[ct]{$0$}
\psfrag{2}[ct]{$2$}
\psfrag{4}[ct]{$4$}
\includegraphics[height=6cm]{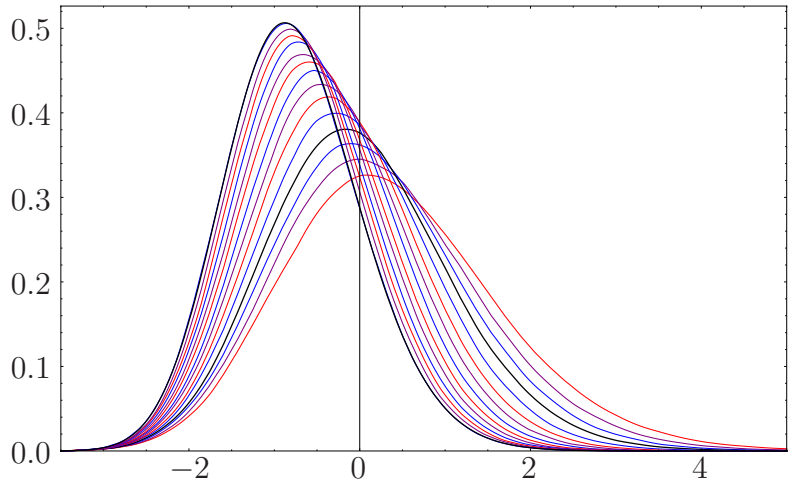}
\caption{Probability densities of $F^{(\sigma)}(s)$ with $\sigma=\sqrt{\alpha/(1-\alpha)}$ from TASEP simulation until time $t_{\rm max}=10^3$ and $10^6$ runs. The different plots corresponds to the values \mbox{$\alpha=0,0.05,0.1,0.15,0.20,0.25,0.3,0.35,0.4,0.45,0.5,0.54,0.58,0.62$}. The left-most black line is the exact rescaled GOE distribution ($\sigma=0$), which overlaps with $\alpha=0$ from the simulations. The black line in the middle is the stationary case ($\sigma=1$).}
\label{PlotDensities}
\end{center}
\end{figure}

When performing a simulation or even a real experiment as in~\cite{Tak12,TS10,TSSS11}, the quantities often measured are not necessarily the probability density function, rather only the first four cumulants. For potential comparison with future numerical and experimental results, we provide below the cumulants obtained through Monte Carlo simulations of TASEP, see Table~\ref{TableCumNumerics}. The values for $\sigma=0$ and $\sigma=1$ are known and they give an indication for the precision of the numerical simulation.
\begin{table}[h!]
\begin{center}
\begin{tabular}{|c|c|c|c|c|c|}
  \hline
$\alpha $ & $\sigma$ & $\kappa_1$=Mean & $\kappa_2$=Variance & $\kappa_3$=3rd cumulant & $\kappa_4$=4th cumulant \\
  \hline
\textbf{0} & \textbf{0} & \textbf{-0.760} & \textbf{0.638} & \textbf{0.149} & \textbf{0.067}\\
0 &  0 & -0.763	& 0.639 & 0.150 & 0.066\\
0.05 & 0.229 & -0.710 & 0.653 & 0.146 & 0.065\\
0.10 & 0.333 &-0.662 & 0.674 & 0.147 & 0.063\\
0.15 & 0.42 &-0.609 & 0.698 & 0.15 & 0.068\\
0.20 & 0.5 & -0.546 & 0.729 & 0.156 & 0.068\\
0.25 & 0.577 & -0.484 & 0.765 & 0.169 & 0.077\\
0.30 & 0.655 & -0.406 & 0.808 & 0.183 & 0.088\\
0.35 & 0.734 & -0.325 & 0.871 & 0.213 & 0.11\\
0.40 & 0.816 & -0.233 & 0.938 & 0.261 & 0.17\\
0.42 & 0.851 & -0.189 & 0.973 & 0.283 & 0.19\\
0.44 & 0.886 & -0.15 & 1.01 & 0.312 & 0.22\\
0.45 & 0.905 & -0.125 & 1.03 & 0.328 & 0.23\\
0.46 & 0.923 & -0.101 & 1.05 & 0.348 & 0.27\\
0.48 & 0.961 & -0.051 & 1.10 & 0.389 & 0.31\\
\textbf{0.5} & \textbf{1} & \textbf{0} & \textbf{1.150} & \textbf{0.444} & \textbf{0.383}\\
0.50 & 1 & 0.002 & 1.15 & 0.430 & 0.36\\
0.52 & 1.04 & 0.056 & 1.21 & 0.487 & 0.43\\
0.54 & 1.08 & 0.115 & 1.27 & 0.552 & 0.51\\
0.56 & 1.13 & 0.181 & 1.34 & 0.632 & 0.63\\
0.58 & 1.18 & 0.247 & 1.42 & 0.721 & 0.73\\
0.60 & 1.22 & 0.326 & 1.51 & 0.836 & 0.92\\
0.62 & 1.28 & 0.407 & 1.62 & 0.969 & 1.1\\
0.64 & 1.33 & 0.494 & 1.73 & 1.12 & 1.3\\
0.70 & 1.53 & 0.822 & 2.23 & 1.87 & 2.7\\
0.80 & 2 &    1.72 &  3.93 & 5.49 & 11.7\\
0.90 & 3 &    3.91 & 10.5 & 28.1 & 102\\
\hline
\end{tabular}
\end{center}
\caption{Table of cumulants obtained numerically by extrapolating the data. The bold numbers are the exactly known numbers. The higher the degree of the cumulant, the lower is the reliability on the numerical extrapolation. The empirical extrapolations are: $\kappa_1(\sigma)=-0.74795 - 0.96976 \sigma + 1.7217 \sigma^{4/3}$, $\kappa_2(\sigma)=0.64268 + 0.0068163 \sigma + 0.50202 \sigma^{8/3}$, $\kappa_3(\sigma)=0.15631 - 0.049956 \sigma + 0.32731 \sigma^4$, and $\kappa_4(\sigma)=0.059476 + 0.012039 \sigma + 0.28315 \sigma^{16/3}$ (the decimals are not though to be all significative).}
\label{TableCumNumerics}
\end{table}

\section{The semi-infinite six-vertex model at its conical (KPZ) point}\label{sectSixVertex}
The planar six-vertex model can be viewed as a statistical mechanics model of interacting up-right lattice paths.
Paths may touch, but they do not cross.
The weight of such a collection of paths is defined by the product of the local weights, which in the canonical labeling are given by
\begin{center}
\psfrag{w1}[c]{$\omega_1$}
\psfrag{w2}[c]{$\omega_2$}
\psfrag{w3}[c]{$\omega_3$}
\psfrag{w4}[c]{$\omega_4$}
\psfrag{w5}[c]{$\omega_5$}
\psfrag{w6}[c]{$\omega_6$}
\includegraphics[height=2cm]{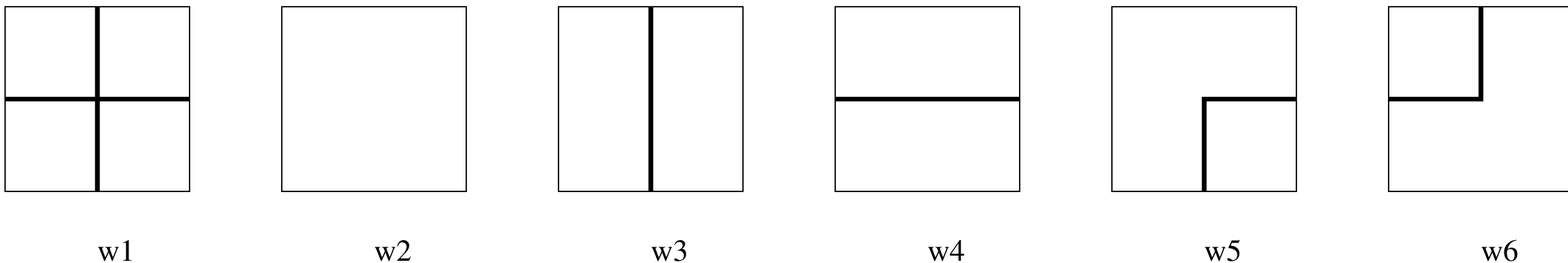}
\end{center}

Much studied is the infinite volume limit with periodic boundary conditions. This results in a translation-invariant Gibbs measure,
which might not be extremal, thereby possibly allowing for either ferro- or anti-ferroelectric ordering. A second popular choice are domain wall
boundary conditions. For a given lattice square, all paths enter at the left edge and exit at the upper edge. There are no paths at the right and lower edge. If paths are interpreted as level lines of a surface over the two-dimensional lattice, then domain wall boundary conditions correspond to a surface boundary which varies on the macroscopic scale. Thus an arctic circle phenomenon is expected~\cite{CP15,CP13b,CPS16b}. In fact, on its free fermion line the six-vertex model with domain wall boundary conditions
is equivalent to the domino tiling of a lattice square rotated by $\pi/4$~\cite{FS06}, also known as  Aztec diamond, which is well-studied~\cite{Jo03,JPS98,
ZJ00}.

In our context, the natural geometry is a semi-infinite lattice. For Ising type models, the influence of boundary fields has been widely
studied~\cite{FS87}, in particular close to criticality. We are not aware of corresponding investigations for the six-vertex model. So far, the results to be described hold only on the stochastic line, which is defined by
\begin{equation}\label{eq4.1}
\vec{\omega} = (1,1,q,p,1-q,1-p),\quad 0\leq p,q\leq1.
\end{equation}
This choice is known as the conical point of the six-vertex model~\cite{BS94}. The name has its origin from studying the free energy, which depends on $\vec\omega$ only through four independent parameters. Using the anti-ferroelectric picture one fixes two temperature-like parameters with $\Delta=(p+q)/(2 p q)>1$ and considers the free energy as a function of the horizontal and vertical electric fields. The free energy has exactly two points satisfying (\ref{eq4.1}) and a conical tip close to such point. The Gibbs measure at the tip exhibits KPZ type fluctuations.

The semi-infinite lattice is obtained by cutting along the anti-diagonal at which boundary conditions are imposed. The Gibbs measure of the semi-infinite lattice is then obtained through the diagonal transfer matrix, which in fact is a Markov chain. We introduce the occupation variables $\eta_{j,t}\in\{0,1\}$, with $0$ in case there is no path at the edge and $1$ in case a path is crossing the edge. We use $j \in \Z$ as the spatial index and $t \in \Z$ as the temporal index. At
two neighboring sites, $j,j+1$, the allowed states are $00,10,01,11$. They are updated according to the transition matrix
\begin{equation}
T^{j,j+1} =
\begin{pmatrix}
1&0&0&0\\
0&1-p&p&0\\
0&q&1-q&0\\
0&0&0&1
\end{pmatrix}.
\end{equation}
The updates of the evenly blocked lattice, $...(01)(23)...$, and the oddly blocked lattice, $...(12)(34)...$, are alternate. Thus, the two step transition probability,
$T$, is defined by
\begin{equation}
 T = T_{\rm e}T_{\rm o}, \quad T_{\rm e} = \bigotimes_{j \in \Z} T^{2j,2j+1},\quad T_{\rm o} = \bigotimes_{j \in \Z} T^{2j-1,2j},
\end{equation}
all acting on $\{0,1\}^\Z$.

Let us consider the alternating product measure $\mu_{(a,b)}=...\otimes\mu_{\rm e} \otimes \mu_{\rm o}\otimes ...$, setting $\mu_{\rm e}(1) = a$,  $\mu_{\rm o}(1) = b$ (say with $\mu_{\rm e}$ on even and $\mu_{\rm o}$ on odd sites). If
\begin{equation}\label{eq4.4}
(1-q)(1-b)a = (1-p) b (1-a),
\end{equation}
then, denoting by $\tau$ the lattice shift,
\begin{equation}
\mu_{(a,b)} T_{*} = \mu_{(a,b)} \circ \tau ,\textrm{ with }*\in\{{\rm e},{\rm o}\}.
\end{equation}
Hence $T$ has a one-parameter family of stationary measures $\mu_{(a^*,b^*)}$, $a^*,b^*$ solution of (\ref{eq4.4}). Thereby we arrive at the Markov chain $\eta_{j,t}$, stationary in both $j$ and $t$.
By construction, this is an  extremal Gibbs measure of the six-vertex model with vertex weights (\ref{eq4.1}). An alternative construction is
explained in~\cite{Agg16}. The density-current relation, $\mathsf{j}(\rho)$, of this process is the solution to
\begin{equation}
\mathsf{j}^2 + ((2-p-q)/(q-p))\mathsf{j}  + \rho(1-\rho) = 0.
\end{equation}
Small perturbations travel with velocity $v = \mathsf{j}'(\rho)$.
The two-point function \mbox{$S(j,t)=\E(\eta_{j,t}\eta_{0,0})-\E(\eta_{j,t})\E(\eta_{0,0})$}, decays exponentially except along the special line $\{j= vt\}$. At distance of order $t^{2/3}$ to this line, according to KPZ scaling theory~\cite{KMH92} (we use the normalisation as in (12) of~\cite{TS12}) one has the scaling form
\begin{equation}\label{eq4.6}
S(j,t) \simeq A (\Gamma t)^{-2/3}f_\mathrm{KPZ}\big((\Gamma t)^{-2/3} (j - vt)\big)
\end{equation}
with $\Gamma = \tfrac12 A^2 |\mathsf{j}''(\rho)|$, where $A$ is the wandering coefficient in the stationary case. In particular,  $S(j,t)$ decays as  $t^{-2/3}$ along $\{j= vt\}$.

If the boundary field $\eta_{j,0}$ satisfies a central limit theorem as in (\ref{eq2.7}), then our theory predicts a scaling behavior identical to (\ref{eq4.6}), except that
$f_\mathrm{KPZ}$ has to be replaced by a different scaling function. In fact, at stationarity, this scaling function is related to the variance of the random variable on the right hand side of (\ref{eq2.9}), see Proposition~4.1 in~\cite{PS01}.

\begin{figure}
\begin{center}
\includegraphics[height=6cm]{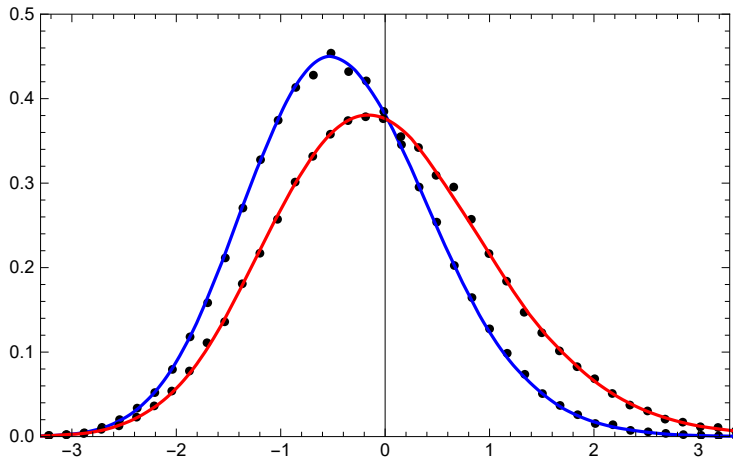}
\caption{Illustration of the universality: for $\sigma^2=\alpha/(1-\alpha)$ with $\alpha=0.3$ (blue) and the stationary benchmark $\alpha=0.5$ (red), the dots are the probabilities for the current fluctuations of the stochastic six-vertex model. Here we choose generic values, $p=0.6$, $q=0.2$, and particle density $\rho=1/2$. The data are scaled according to KPZ scaling theory. We also considered the first order correction (known to generically decay as $t^{-1/3}$~\cite{FF11,Tak12,TS10,TSSS11}), which however resulted only in minimal shifts ($0.011$ resp.\ $0.005$). The solid line is the result of TASEP simulation for the same value of $\sigma$. The number of Monte Carlo runs is $10^5$.}
\label{PlotDensities6vertex}
\end{center}
\end{figure}

To test our predictions, we consider the boundary fields $\mu_{(a,b)}$ with general $0\leq a,b \leq 1$. Then Assumption~\ref{assumptBrownianScaling} holds with
\begin{equation}\label{eq4.8}
\sigma^2 = \tfrac{1}{4\gamma}\big(a(1-a) + b(1-b)\big) \quad \textrm{and}\quad\gamma=\tfrac14\left(a^*(1-a^*)+b^*(1-b^*)\right).
\end{equation}
For the simulation we set $\E(\eta_{j,0}) + \E(\eta_{j+1,0})= 2\rho = 1$, which implies $v=0$ and $a + b =1$ and thus $\sigma^2=a(1-a)/(2\gamma)$. In addition imposing stationarity, the unique solution of (\ref{eq4.4}) in $[0,1]$ is $a=a^*=a_{\rm BR}$ given by
\begin{equation}
a_{\rm BR}=\frac{1 - q -\sqrt{(1-p)(1-q)}}{p - q}.
\end{equation}

Let us denote by $J(t)$ the  current at the origin summed over the time span $[0,t]$. At density $\rho=1/2$, for the scaling coefficient $\Gamma$ we have to set $a=a_{\rm BR}$ and a computation gives $|\mathsf{j}''(\rho)|=|p-q| / \sqrt{(1-p)(1-q)}$. The stationary current is given by $\mathsf{j}=a_{\rm BR}(1-a_{\rm BR})(p-q)/2=a_{\rm BR}-1/2$. Thus, by the KPZ scaling theory, we expect that
\begin{equation}\label{eq4.10}
\lim_{t\to\infty} \Pb\left(\frac{J(t)-\mathsf{j} t}{-(\Gamma t)^{1/3}}\leq s\right)= F^{(\sigma)}(s),
\end{equation}
where the values of $a$ and $b$ for generic $\sigma$ are
\begin{equation}
a=\tfrac12 (1-\sqrt{1-4\sigma^2 a_{\rm BR}(1-a_{\rm BR})}),\quad b=1-a=\tfrac12 (1+\sqrt{1-4\sigma^2 a_{\rm BR}(1-a_{\rm BR})}),
\end{equation}
since (\ref{eq4.8}) has to be satisfied and for $\sigma=1$ also (\ref{eq4.4}). By varying $a$, we  realize the family of universal distributions $F^{(\sigma)}$ with $\sigma\in [0,\sigma_\mathrm{max}]$, where \mbox{$\sigma_\mathrm{max}^2 =(4 a_\mathrm{BR}(1-a_\mathrm{BR}))^{-1}>1$}. The theoretical prediction (\ref{eq4.10}) is checked for a generic value of $(p,q)$ and two values of the diffusion coefficient $\sigma$, namely $\sigma^2=3/7$ and the stationary case. The numerical agreement is very precise as illustrated in Figures~\ref{PlotDensities6vertex} and~\ref{LogPlotDensities6vertex}.

\begin{figure}
\begin{center}
\includegraphics[height=6cm]{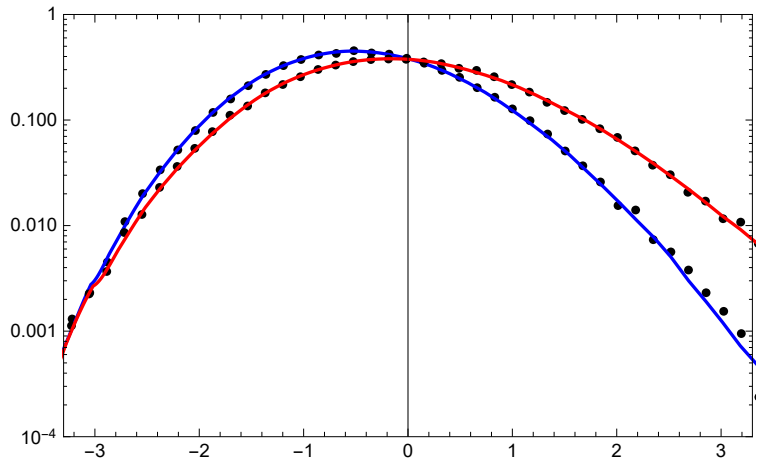}
\caption{Logarithmic plot of the probability densities of Figure~\ref{PlotDensities6vertex}.}
\label{LogPlotDensities6vertex}
\end{center}
\end{figure}

\appendix
\section{Approximate distribution for large $\sigma$}\label{SectLargeSigma}
Since the evaluation of the variational process is difficult, we present an approximate form of (\ref{eq1.21}), which is then compared to Monte Carlo results. For large values of $\sigma$, the fluctuations of the initial Brownian motion should be more important than those from the Airy$_2$ process. A first approximation to (\ref{eq1.21}) is to replace the ${\cal A}_2(u)$ just by ${\cal A}_2(0)$. In this case, the distribution function can be computed analytically. Indeed,
\begin{equation}
F^{(\sigma)}_{\rm appr}(s):=\Pb\left( {\cal A}_2(0)+\max_{u\in\R} \left[\sqrt{2}\sigma B(u)-u^2\right]\leq s\right)
\end{equation}
Since ${\cal A}_2(0)$ and $\max_{u\in\R} \left[\sqrt{2}\sigma B(u)-u^2\right]$ are independent random variables, $F^{(\sigma)}_{\rm appr}$ is the convolution of their distributions. We know that $\Pb({\cal A}_2(0)\leq s)=F_{\rm GUE}(s)$. The distribution function for $\max_{u\in\R} \left[\sqrt{2}\sigma B(u)-u^2\right]$ has been derived by Groeneboom~\cite{Gro89}, see also his more recent paper~\cite{Gro10} describing a numerical evaluation. We denote by $F_{{\rm Gr},c}$ to be the Groeneboom distribution defined by
\begin{equation}
F_{{\rm Gr},c}(s):=\Pb\left(\max_{v\in\R} \left[B(v)-c v^2\right]\leq s\right).
\end{equation}
Then,
\begin{equation}
\Pb\left(\max_{u\in\R} \left[\sqrt{2}\sigma B(u)-u^2\right]\leq s\right) =F_{{\rm Gr},1/\sqrt{2}}\left(\frac{s}{\sqrt{2}\sigma^{4/3}}\right)=:F_{{\rm Gr}}^{(\sigma)}(s).
\end{equation}
As a consequence
\begin{equation}
F^{(\sigma)}_{\rm appr}(s)=\int_\R dx F'_{\rm GUE}(x) F_{{\rm Gr}}^{(\sigma)}(s-x).
\end{equation}
The approximation $F^{(\sigma)}_{\rm appr}(s)$ of $F^{(\sigma)}(s)$ will not be very accurate for small values of $s$, since these corresponds to the realizations of $B$ and ${\cal A}_2$ which are small around $u=0$, and in that case asking that only $B$ is small is not quite the same as $B$ and ${\cal A}_2$ small. However, for larger values of $s$, the approximation should improve, because the events leading to large maximums are due to large values of $B$ and ${\cal A}_2$. Since ${\cal A}_2$ is stationary, these events should be dominated by large fluctuations of the Brownian motion.

A simplifying feature is that $F^{(\sigma)}_{\rm appr}$ is the distribution of a sum of two independent random variables, their cumulants are just the sums of their cumulants. Furthermore, if we denote by $\kappa^{(n)}$, the $n$th cumulant of $F_{{\rm Gr},1/\sqrt{2}}$, then the $n$th cumulant of $F_{{\rm Gr}}^{(\sigma)}$ is given by $2^{n/2}\sigma^{4n/3}\kappa^{(n)}$. The relevant cumulants are reported in Table~\ref{TableCumBasic}.
\begin{table}[h!]
\begin{center}
\begin{tabular}{|c|c|c|c|c|}
  \hline
Distribution & Mean & Variance & 3rd cumulant & 4th cumulant \\
  \hline
$F_{\rm GUE}$ & -1.77108 & 0.8132 & 0.164 & 0.109 \\
$F^{(0)}(\cdot)=\tilde F_{\rm GOE}(2^{2/3}\cdot)$ & -0.76007 & 0.6381 & 0.149 & 0.067 \\
$F^{(1)}=F_{\rm BR}$ & 0 & 1.1504 & 0.444 & 0.383\\
$F_{{\rm Gr},1/\sqrt{2}}$ & 0.8875 & 0.2646 & 0.128 & 0.080 \\
\hline
\end{tabular}
\end{center}
\caption{Table of the first four cumulants for a few distributions.}
\label{TableCumBasic}
\end{table}

After running TASEP simulations for a few values of $\sigma$, surprisingly the probability density plots of $F^{(\sigma)}_{\rm appr}$ and $F^{(\sigma)}$ are already quite close even for fairly small values of $\sigma$. In Figures~\ref{PlotDensitiesLargeSigma} and~\ref{PlotLogDensitiesLargeSigma} we plot the probability densities of $F^{(\sigma)}(s)$ obtained from TASEP simulations and the approximation $F^{(\sigma)}_{\rm appr}(s)$. In order to obtain the probability density of $F^{(\sigma)}_{\rm appr}(s)$, we first use the formula of~\cite{Gro10} to derive a table of values for the density of the Groeneboom distribution $F_{{\rm Gr},1/\sqrt{2}}$ with values in $[0,20]$ taking values every $10^{-3}$, and then we numerically compute the convolution of the rescaled Groeneboom density with the GUE density.
\begin{figure}
\begin{center}
\includegraphics[width=0.48\textwidth]{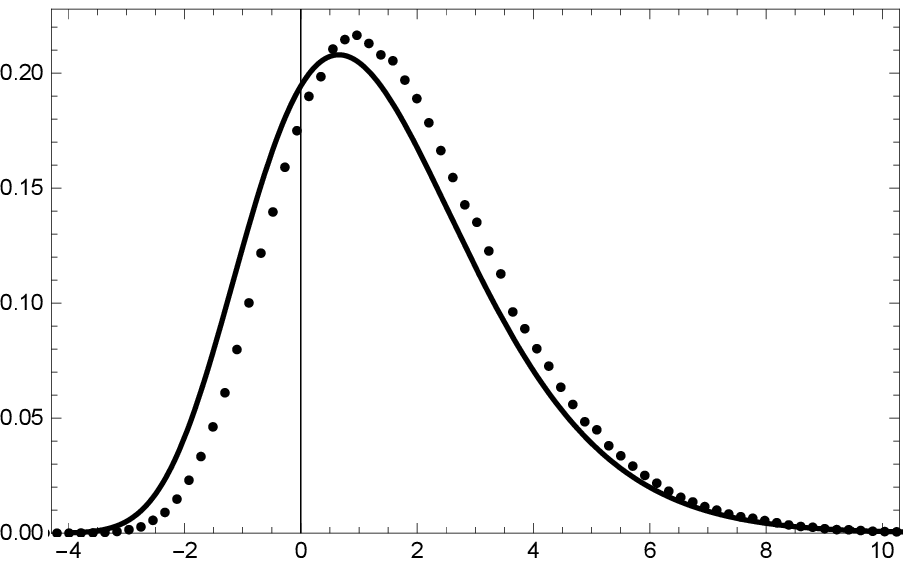} \hfill \includegraphics[width=0.48\textwidth]{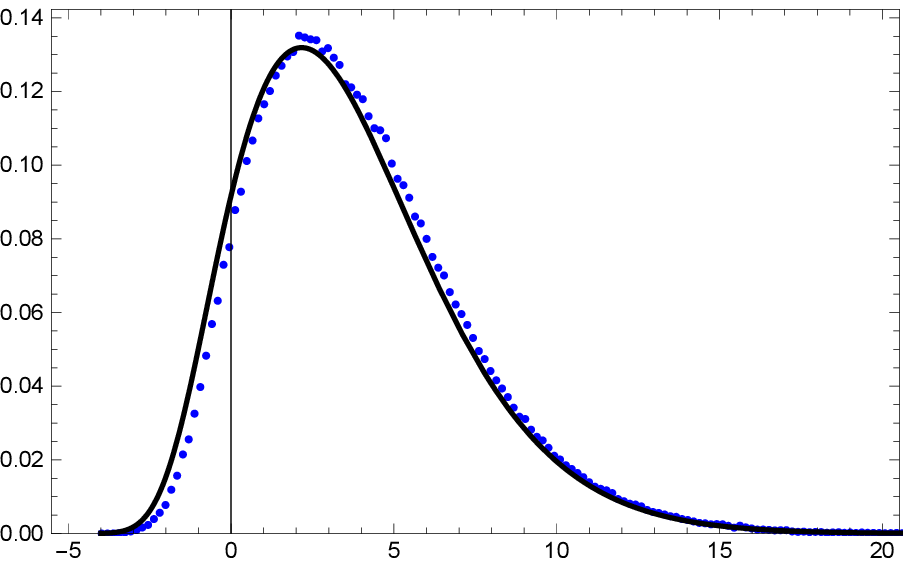} \\[0.5em]
\includegraphics[width=0.48\textwidth]{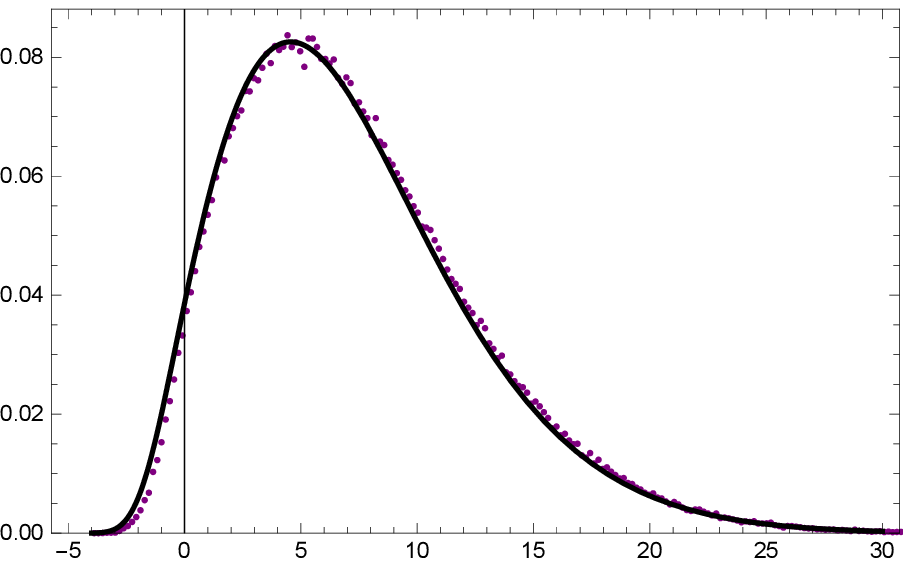} \hfill \includegraphics[width=0.48\textwidth]{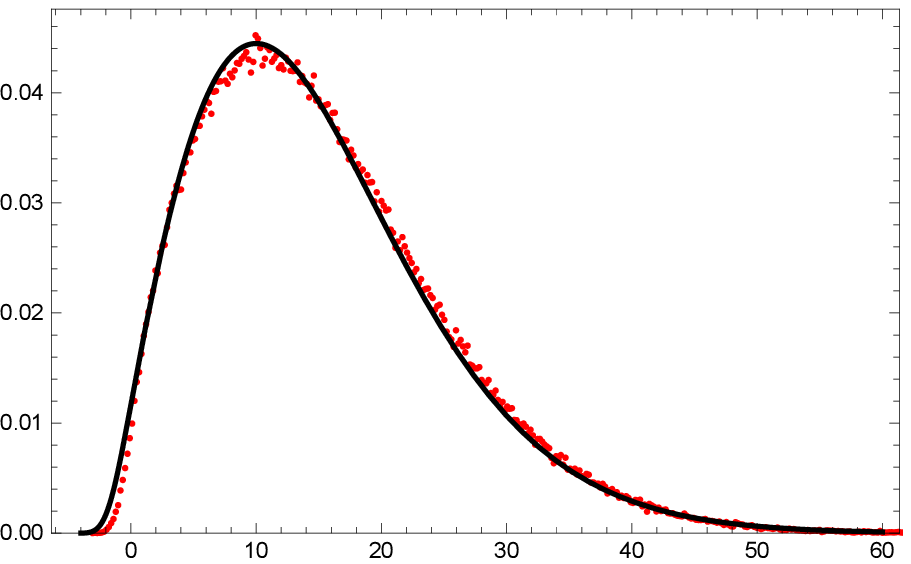}
\caption{Comparison between the probability densities of $F^{(\sigma)}(s)$ and $F^{(\sigma)}_{\rm appr}(s)$: top-left is $\sigma=2.0$, top-right is $\sigma=3.0$, bottom-left is $\sigma=4.36$ and bottom-right is $\sigma=7.0$. The simulation for $\sigma=2.0$ uses maximal time $t_{\rm max}=2000$, while the other $t_{\rm max}=3000$. The number of runs are $5\times 10^5$ for each case.}
\label{PlotDensitiesLargeSigma}
\end{center}
\end{figure}

\begin{figure}
\begin{center}
\includegraphics[width=0.48\textwidth]{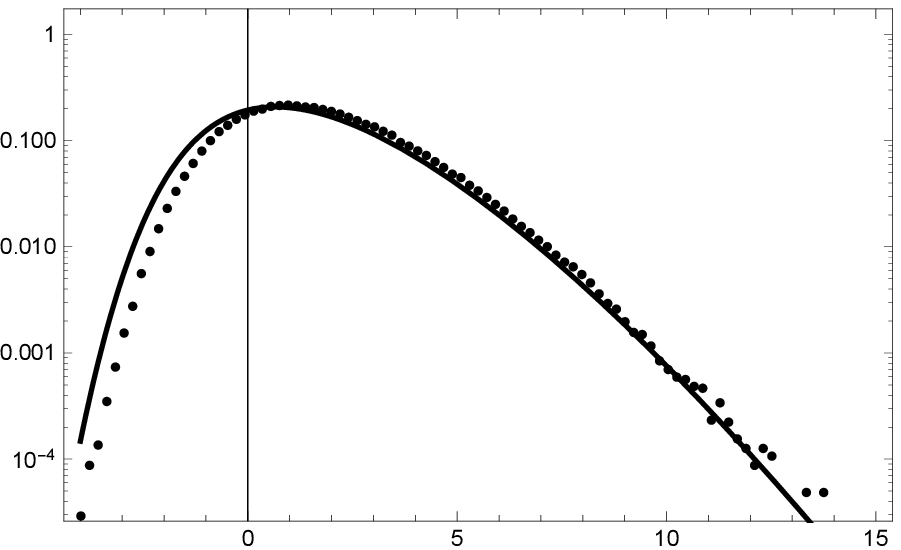} \hfill \includegraphics[width=0.48\textwidth]{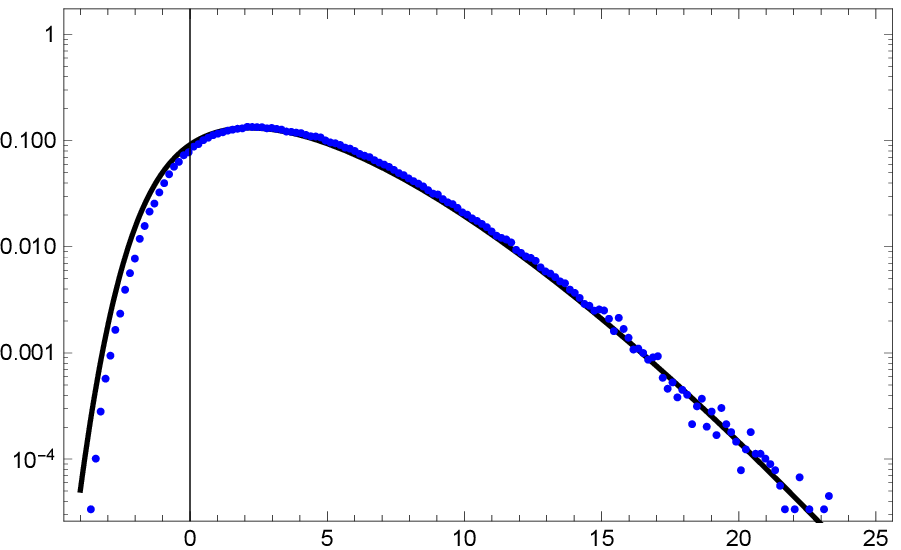} \\[0.5em]
\includegraphics[width=0.48\textwidth]{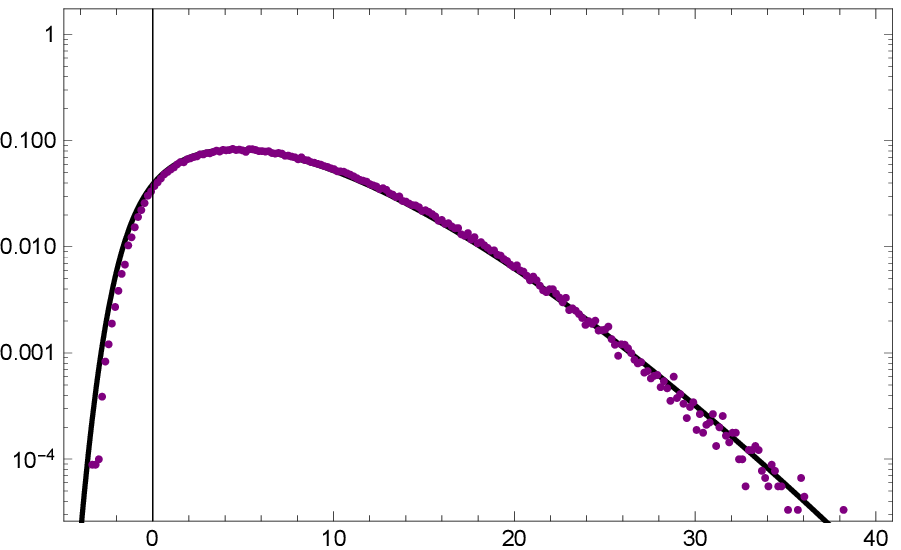} \hfill \includegraphics[width=0.48\textwidth]{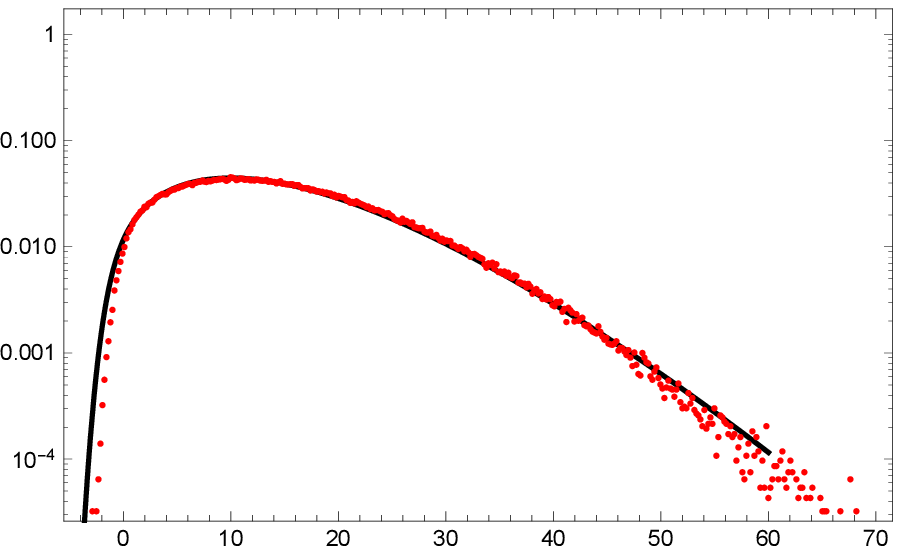}
\caption{Logarithmic plots of the probability densities of Figure~\ref{PlotDensitiesLargeSigma}.}
\label{PlotLogDensitiesLargeSigma}
\end{center}
\end{figure}


\end{document}